\newtheorem{theorem}{Theorem}[section]
\newtheorem{definition}[theorem]{Definition}
\newtheorem{proposition}[theorem]{Proposition}
\newtheorem{lemma}[theorem]{Lemma}
\newtheorem{corollary}[theorem]{Corollary}
\newcommand{\R}{\mathds{R}}
\newcommand{\N}{\mathds{N}}
\newcommand{\1}{\mathbbm{1}}
\DeclareMathOperator{\dark}{Dark}
\DeclareMathOperator{\argmin}{argmin}
\DeclareMathOperator{\conv}{conv}
\DeclareMathOperator{\cone}{cone}
\DeclareMathOperator{\inter}{int}
\newcommand{\eps}{\varepsilon}
\newcommand{\multibinom}[2]{\left[\begin{array}{cc}
		#1 \\
		#2
	\end{array} \right]}
\newcounter{problems} 
\begin{document}

\title[Bounds on polarization problems on compact sets via MIP]{Bounds on polarization problems on compact sets via mixed integer programming}


\author*[1]{\fnm{Jan} \sur{Rolfes}}\email{jrolfes@kth.se}
\equalcont{These authors contributed equally to this work.}

\author[2]{\fnm{Robert} \sur{Sch\"uler}}\email{robert.schueler2@uni-rostock.de}
\equalcont{These authors contributed equally to this work.}

\author[3]{\fnm{Marc Christian} \sur{Zimmermann}}\email{marc.christian.zimmermann@gmail.com}
\equalcont{These authors contributed equally to this work.}

\affil*[1]{\orgdiv{Optimization and System Theory}, \orgname{KTH - Royal Institute of Technology}, \orgaddress{\street{Lindtstedtsv\"agen 25}, \city{Stockholm}, \postcode{114 28}, \country{Sweden}}}

\affil[2]{\orgdiv{Institute for Mathematics}, \orgname{University of Rostock}, \orgaddress{\street{Universit\"atsplatz 1}, \city{Rostock}, \postcode{18051 Rostock}, \country{Germany}}}

\affil[3]{\orgdiv{Abteilung Mathematik}, \orgname{Universit\"at zu K\"oln}, \orgaddress{\street{Weyertal 86-90}, \city{K\"oln}, \postcode{50931}, \country{Germany}}}


\abstract{Finding point configurations, that yield the maximum polarization (Chebyshev constant) is gaining interest in the field of geometric optimization. In the present article, we study the problem of unconstrained maximum polarization on compact sets. In particular, we discuss necessary conditions for local optimality, such as that a locally optimal configuration is always contained in the convex hull of the respective darkest points. Building on this, we propose two sequences of mixed-integer linear programs in order to compute lower and upper bounds on the maximal polarization, where the lower bound is constructive. Moreover, we prove the convergence of these sequences towards the maximal polarization. }

\keywords{maximal polarization, potentials, mixed integer programming, geometric optimization}


\pacs[MSC Classification]{31C20, 51-08, 90C11}

\maketitle

\section{Introduction}

Suppose you were given a set $A$ and $N$ lamps you are to place such that the darkest point in $A$ is as bright as possible. In less descriptive terms this max-min problem is known as the maximal polarization problem, which we now state in mathematical language.

Let $A, D\subset\R^n$ be nonempty sets and let $K:A\times D\rightarrow\R\cup\{+\infty\}$ be a function bounded from below.
An $N$-point multiset $C\subseteq D$ will be refered to as \emph{point configuration (of $N$ points)} and the set of all $N$-point configurations supported on $D$ will be denoted by $\mathcal{C}$.
We assign the discrete $K$-potential associated with $C$ to every point $p\in A$ as
$$U_{K, A}(p, C) = \sum_{c\in C} K(p, c).$$
To any point configuration we associate its \emph{polarization}
$$P_{K, A}(C) = \inf_{p\in A} U_{K, A}(p, C).$$
It is then natural to consider the \emph{(maximal) polarization problem}:
\begin{equation}
	\label{eq:potential_polarization}
	\mathcal{P}_K(A) = \sup_{C\in\mathcal{C}} P_{K, A}(C).
\end{equation}

For a broader context and overview of this formulation of the polarization problem we refer to the recent monograph \cite[CH. $14$]{borodachov2019discrete}. Problems of this kind have been extensively studied. In particular the case of $A = D = S^{n-1}$ being a unit sphere and $K(x,y) = \|x-y\|^{-s}$ being related to a Riesz potential is rich in results on explicit optimal configurations of few points (eg. \cite{stolarsky1975sum},
\cite{ambrus2009analytic}, \cite{ambrus2010chebyshev},
\cite{nikolov2011sum}, \cite{hardin2013polarization},  \cite{erdelyi2013riesz} \cite{Borodachov2022a}), bounds on maximal polarization (eg. \cite{ambrus2010chebyshev}, \cite{erdelyi2013riesz}) and asymptotic results (eg. \cite{borodachov2014asymptotics}, \cite{borodachov2018optimal}, \cite{hardin2020unconstrained}, \cite{anderson2022polarization}). 
Asymptotic results are also available for more general choices of $A$, such as rectifiable sets.
\bigskip
Moreover, the polarization problem as stated in \eqref{eq:potential_polarization} is closely related to the well-studied covering problem, i.e. the question, whether $A$ can be covered by balls of radius $r>0$. In particular, let $K(x,y)=\1_{[0,r]}(\|x-y\|)$, then, a covering with $N$ balls exists if and only if $1\leq \mathcal{P}_K(A)$. General discussions of covering problems can be found, for example in the seminal book by Conway and Sloane \cite{conway2013sphere}. For covering problems on compact metric spaces we refer to \cite{Naszodi2018a} for an overview, whereas constructive methods have been developed, e.g. in \cite{Naszodi2014a} and \cite{Rolfes2017a}.

\bigskip

In this paper we consider polarization problems of the following kind.
The set $A\subset \R^n$ will be a compact set and we will impose no restrictions on the point configurations, i.e. $D = \R^n$. Furthermore, we restrict to functions $K(x, y) = f(\|x-y\|)$ for some continuous strictly monotone decreasing function $f : \R_+ \rightarrow\R_+$ and use the notation $U_{f, A}(p, C)$, $P_{f, A}(C), \mathcal{P}_f(A)$. If the subscript parameters are clear from context we omit them.

Under the above assumptions, we therefore consider the optimization problem 

\begin{equation}
	\label{eq:polarization}
	\mathcal{P}_{f}(A) = \sup_{C\subset\mathcal{C}} P_{f, A}(C).
\end{equation}

For explicit computations we choose Gaussians $f(x) = e^{-ax^2}$. These functions appear rather naturally in the context of universal optimality (cf. \cite{cohn2007universally}): Recall that a function $g:(0,\infty) \rightarrow \R$ is \emph{completely monotonic} if it is infinitely differentiable and the derivatives satisfy $(-1)^k g^{(k)} \geq 0$ for all $k$. The functions $g(x) = e^{-\alpha x}$ are completely monotonic and we can write $f(\|x-y\|) = g(\|x-y\|^2)$. In this context functions $f(x) = g(x^2)$ are called completely monotonic functions of squared distance.

A Theorem of Bernstein (cf. \cite[Thm. 9.16]{simon2011convexity}) asserts that every completely monotonic function can be written as a convergent integral
\[
g(x) = \int e^{-\alpha x} d\mu(\alpha).
\]
From this one obtains that the set of completely monotonic functions of squared distance is the cone spanned by the gaussians and the constant function $x \mapsto 1$.

In particular the commonly used Riesz potentials can be written in this way.

\bigskip

We fix some more notation for the case that the infimum $P_{f,A}$ is in fact a minimum, i.e. the minimizers of this function are points in $A$. In this case, any such minimizer will be called a \emph{darkest point} of $A$.
Moreover,
$$\dark_A(C) = \{p\in A \ : \ \sum_{c\in C} f(\|p- c\|) = P_{f, A}(C)\}$$
will be called the \emph{set of darkest points} of $C$. To explain this wording we invite the reader to recall the interpretation of the problem we gave in the beginning: we center lamps at the points in $C$ which now illuminate $A$. The polarization of $A$ is then the lowest level of brightness any point in $A$ can have, any point realizing this is a ``darkest point''.

Note, that requiring $A$ to be compact is rather natural. Indeed if $A$ were unbounded, then the value of the polarization would always tend to $N\cdot \inf f$.
If $A$ were not closed, darkest points need not exist. Consider for example $A$ to be the open disc and $C$ only containing the origin. In this case, $P_{f, A}(C)$ is not attained at any point in $A$.
\bigskip
In Section \ref{sec:darkest_points} we provide some results connecting a locally optimal configuration to the set of its respective darkest points. Theorem \ref{thm:necessary_condition} states that the points of such a configuration are contained in the convex hull of the darkest points while on the other hand Theorem \ref{thm:darkest:points} states that the darkest points are located either on the boundary of $A$ or in the interior of the convex hull of the configuration. These restrictions provide necessary conditions for optimality. 

In Section \ref{sec:mips} we investigate mixed-integer approximations of the polarization problem providing upper and lower bounds. These are collected in Theorem~\ref{thm:hierarchy}. We then prove that these bounds indeed converge to $\mathcal{P}_{f}(A)$ in Theorems \ref{thm:lower_convergence} and \ref{thm:upper_convergence}. 

In Section \ref{sec:comp_results} we illustrate capabilities and limitations of the approach on some benchmark instances.

\section{Darkest points and necessary conditions}
\label{sec:darkest_points}

In this section, we investigate structural properties an optimal configuration needs to satisfy in order to potentially falsify the optimality of a given polarization and reduce the search space of optimal configurations.

In particular, we have the following necessary condition that relates local optimality of a configuration to the set of its darkest points:

\begin{theorem}
	\label{thm:necessary_condition}
	If $C$ is a locally optimal solution of \eqref{eq:polarization}, then 
	$$C\subset \conv\dark_A(C).$$
\end{theorem}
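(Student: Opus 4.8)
The plan is to argue by contraposition: assuming some point $c_0 \in C$ lies outside $\conv \dark_A(C)$, I will construct a perturbation of $C$ that strictly increases the polarization, contradicting local optimality. The key geometric input is the separating hyperplane theorem: since $\conv \dark_A(C)$ is a compact convex set and $c_0 \notin \conv \dark_A(C)$, there is a unit vector $v \in \R^n$ and a scalar such that $\langle v, c_0 \rangle > \langle v, p \rangle$ for every $p \in \dark_A(C)$. Intuitively, moving the lamp at $c_0$ a small distance in the direction $-v$ decreases its distance to every darkest point, hence increases $f(\|p - c_0\|)$ at each such $p$ (using that $f$ is strictly decreasing), and therefore raises the potential at every current darkest point.

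The main steps I would carry out are as follows. First, fix $\delta > 0$ and consider the perturbed configuration $C_\delta$ obtained from $C$ by replacing $c_0$ with $c_0 - \delta v$. For $p \in \dark_A(C)$, the strict separation gives $\|p - (c_0 - \delta v)\| < \|p - c_0\|$ for all sufficiently small $\delta > 0$ (this is a first-order computation: $\frac{d}{d\delta}\|p - c_0 + \delta v\|^2\big|_{\delta = 0} = 2\langle v, p - c_0\rangle < 0$), hence $U_{f,A}(p, C_\delta) > U_{f,A}(p, C)= P_{f,A}(C)$. Second — and this is the part requiring care — I must ensure that the new polarization $P_{f,A}(C_\delta) = \inf_{p \in A} U_{f,A}(p, C_\delta)$ is still strictly larger than $P_{f,A}(C)$, not merely the values at the old darkest points. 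This is where a compactness argument enters: the function $p \mapsto U_{f,A}(p,C)$ is continuous on the compact set $A$, so for any neighborhood $W$ of $\dark_A(C)$ there is a gap $\eta > 0$ with $U_{f,A}(p,C) \geq P_{f,A}(C) + \eta$ for all $p \in A \setminus W$; choosing $W$ small and $\delta$ small enough, uniform continuity keeps $U_{f,A}(\cdot, C_\delta)$ within $\eta/2$ of $U_{f,A}(\cdot,C)$ on $A \setminus W$, while on $W$ one uses a uniform version of the distance-decrease estimate to get a strict increase. Combining the two regions yields $P_{f,A}(C_\delta) > P_{f,A}(C)$ for all small $\delta$, contradicting local optimality.

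I expect the main obstacle to be the second step: transferring the pointwise strict increase at darkest points into a strict increase of the \emph{infimum} over all of $A$. One has to handle the possibility that $\dark_A(C)$ is not a single point (it may be a positive-dimensional set) and that near-darkest points outside $\dark_A(C)$ could, a priori, see their potential decrease under the perturbation. The resolution is the two-region compactness split sketched above, making quantitative the intuition that "far from the darkest points there is slack, near the darkest points the potential genuinely goes up." A minor technical point worth stating explicitly is that $\dark_A(C)$ is nonempty (so that $P_{f,A}(C)$ is attained), which follows from continuity of $f$ and compactness of $A$; this also guarantees $\conv \dark_A(C)$ is a well-defined nonempty compact convex set to which the separation theorem applies.
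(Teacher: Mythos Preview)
Your proposal is correct and follows essentially the same strategy as the paper: separate $c_0$ from the compact convex set $\conv\dark_A(C)$ by a hyperplane, push $c_0$ a small amount toward that hyperplane, and combine a strict increase of the potential at the darkest points with a compactness/slack argument on the rest of $A$.

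The one difference worth noting concerns precisely the step you flagged as the main obstacle. Instead of splitting $A$ into a neighborhood $W$ of $\dark_A(C)$ and its complement (which forces you to prove a \emph{uniform} distance-decrease on $W$), the paper splits $A$ by the separating hyperplane $H$ itself: on the entire closed halfspace of $H$ containing $\dark_A(C)$, the perturbed point $c_0-\delta v$ is strictly closer to \emph{every} $p$ there (an elementary halfspace computation, no first-order or uniformity argument needed), so $U(p,C_\delta)>U(p,C)\geq P(C)$ throughout that halfspace; the other halfspace contains no darkest points, so compactness gives a uniform gap $U(\cdot,C)\geq P(C)+\eta$ there, and continuity in $\delta$ finishes. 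This partition makes your ``uniform version of the distance-decrease estimate'' automatic and shortens the argument.
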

\begin{proof}
	Suppose $C$ is a configuration for which we have $c\in C$ such that $c\notin \conv(\dark_A(C))$.
	In the following, we discuss how to construct a new configuration $C'$ in an arbitrary neighbourhood of $C$ such that $P(C') > P(C)$. Thus $C$ can not be locally optimal.
	Since $f$ is continuous, the niveau line
	$$S = \{p\in\R^n \ : \ U(p, C) = P(C)\}$$
	containing the darkest points is closed and thus $\dark_A(C) = A\cap S$ is compact.
	Therefore, $\conv(\dark_A(C))$ is a compact convex set and we can find a hyperplane $H = \{x \ : \ a^\top  x = b\}$ strictly separating this set from $c$ such that $a^\top  c < b$. 
	For $\eps > 0$ small enough $c' = c + \eps a$ still satisfies $a^\top c' < b$. We obtain a new configuration $C' = C\cup\{c'\}\setminus\{c\}$.
	Note, that for every neighbourhood of $C$, there is a sufficiently small $\eps$ such that $C'$ is contained in said neighbourhood.
	Obviously $|c' -p| < |c - p|$ for all points $p$ in the non-negative halfspace of $H$. In particular $c'$ is closer to all of the darkest points than $c$ and
	since $f$ is monotonously decreasing
	$$U(p, C') > U(p, C) \geq P(C)$$
	for all points $p$ in the non-negative halfspace of $H$.
	
	It remains to assert this also on the negative halfspace.
	Since all the darkest points are on the positive side of $H$, a point $p\in A\cap (H\cup H_{-})$ satisfies
	$$U(p, C) > P(C).$$
	Since $A\cap (H\cup H_{-})$ is compact this yields 
	$$U(p, C) \geq \delta > P(C)$$
	for some constant $\delta$.
	By continuity of $f$, for $\eps$ small enough, we can guarantee that
	$$U(p, C') > P(C)$$
	for all $p\in A\cap (H\cup H_{-})$.
	Altogether, 
	$$P(C') = \inf_{p\in A} U(p, C') > P(C).$$
\end{proof}

The formulated condition is very \enquote{unstable} in the following sense:
\begin{proposition}
	\label{prop:unstable}
	Let $C$ be a configuration such that $C\subset\conv\dark_A(C)$.
	Let $c\in C$ and $c'\neq c$ and $C' = C\cup\{c'\}\setminus\{c\}$.
	Then
	\begin{enumerate}
		\item $P(C') < P(C)$ and
		\item $C'\nsubseteq \conv\dark_A(C')$.
	\end{enumerate}
\end{proposition}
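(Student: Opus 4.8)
The plan is to exploit the rigidity established in Theorem \ref{thm:necessary_condition} together with a strict-monotonicity argument. For part (1), I would argue that moving a single lamp $c$ to a new location $c'$ strictly decreases the potential at \emph{some} darkest point of $C$. Indeed, since $C \subset \conv \dark_A(C)$, the point $c$ lies in the convex hull of the darkest points; hence $c$ cannot be strictly farther from \emph{every} darkest point than $c'$ is — if it were, then $c$ would lie strictly inside the intersection of the open balls $\{x : \|x - p\| < \|c' - p\|\}$ over all $p \in \dark_A(C)$, an intersection of halfspaces (each such set is a halfspace, the one containing the midpoint side), and one checks that a point in $\conv \dark_A(C)$ cannot satisfy $\|c - p\| < \|c' - p\|$ simultaneously for all $p \in \dark_A(C)$ unless $c = c'$. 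The cleanest way to see this: the function $p \mapsto \|c'-p\|^2 - \|c-p\|^2 = \|c'\|^2 - \|c\|^2 - 2\langle c'-c, p\rangle$ is affine in $p$, so if it is positive on all of $\dark_A(C)$ it is positive on $\conv \dark_A(C) \ni c$, giving $\|c'-c\|^2 - 0 > 0$ and $-2\langle c'-c,c\rangle + \|c'\|^2 - \|c\|^2 > 0$; rearranged this says $\|c' - c\|^2 < 0$, a contradiction. Therefore there is a darkest point $p_0 \in \dark_A(C)$ with $\|c' - p_0\| \geq \|c - p_0\|$, and since $f$ is strictly decreasing, $f(\|c'-p_0\|) \leq f(\|c-p_0\|)$, while the contributions of all other lamps are unchanged; hence $U(p_0, C') \leq U(p_0, C) = P(C)$, so $P(C') \leq U(p_0, C') \leq P(C)$. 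To get the strict inequality I would pick $p_0$ to be a darkest point \emph{maximizing} $\langle c' - c, p\rangle$ over $\dark_A(C)$ (equivalently, minimizing $\|c' - p\|^2 - \|c-p\|^2$); the affine-function argument then shows $\|c'-p_0\| > \|c-p_0\|$ strictly whenever $c \neq c'$, since the minimum of a nonconstant affine function over a set containing $c$ in its convex hull is strictly below its value at $c$ unless the function is constant — and if it is constant then $c' = c$. Strict monotonicity of $f$ then gives $U(p_0, C') < P(C)$, so $P(C') < P(C)$.

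For part (2), I would observe that $P(C') < P(C)$ forces $\dark_A(C)$ and $\dark_A(C')$ to be disjoint: any $q \in \dark_A(C')$ has $U(q, C') = P(C') < P(C) \le U(q, C)$, so $q \notin \dark_A(C)$; conversely no darkest point of $C$ can be a darkest point of $C'$. Moreover, at the specific darkest point $p_0$ chosen above we have $U(p_0, C') < P(C)$; but I need the sharper fact that the old lamp positions $C \cap C' = C \setminus \{c\}$, which all lie in $\conv \dark_A(C)$, are now excluded from $\conv \dark_A(C')$. The cleanest route: show $\dark_A(C')$ lies strictly on one side of a hyperplane separating it from $\conv \dark_A(C)$, and then invoke that $c \in C'$ together with $C \setminus \{c\} \subset \conv \dark_A(C)$ means at least one point of $C'$ — in fact every point of $C \setminus \{c\}$, provided $C$ had at least two points — fails to lie in $\conv\dark_A(C')$. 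Alternatively, and more robustly, apply Theorem \ref{thm:necessary_condition} in contrapositive form: if $C' \subseteq \conv \dark_A(C')$ held, then by Theorem \ref{thm:necessary_condition} $C'$ would be a candidate for local optimality — but that alone is not a contradiction, so this route needs care. I would instead argue directly that if $C' \subseteq \conv\dark_A(C')$, then running the part-(1) argument with the roles of $C$ and $C'$ swapped (moving the lamp $c'$ back to $c$, recovering $C$) would yield $P(C) < P(C')$, contradicting part (1). This symmetric bootstrap is the elegant finish: part (1) applied to the pair $(C', C'')$ with $C'' = C' \cup \{c\} \setminus \{c'\} = C$ gives $P(C) < P(C')$ as soon as $C' \subseteq \conv \dark_A(C')$, contradicting $P(C') < P(C)$ from part (1) applied to $(C, C')$.

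The main obstacle I anticipate is handling the \textbf{multiset} subtleties and the degenerate case where $C$ has a single point or where $c$ appears with multiplicity in $C$: when $c$ has multiplicity $m \ge 2$, replacing one copy still leaves $c \in C'$, and one must be careful that $\conv\dark_A(C')$ genuinely excludes the surviving copy of $c$ and the other unchanged lamps. The affine-function separation argument above is robust to multiplicities since it only uses that each retained lamp lies in $\conv \dark_A(C)$ and that $\dark_A(C') \cap \dark_A(C) = \emptyset$ with strict separation; the strictness in part (1) is exactly what powers this. A second, more technical point is ensuring the chosen $p_0$ exists: this needs $\dark_A(C) \neq \emptyset$, i.e. that the infimum $P_{f,A}(C)$ is attained — which holds because $A$ is compact and $U(\cdot, C)$ is continuous, as already noted in the text. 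With these caveats dispatched, the proof is short.
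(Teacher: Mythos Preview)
Your approach is essentially the paper's: for (1), the affine function $\phi(p) = \|c'-p\|^2 - \|c-p\|^2$ is exactly what encodes the paper's hyperplane through $c$ with normal $c-c'$, and the existence of a darkest point $p_0$ with $\phi(p_0) \geq \phi(c) = \|c-c'\|^2 > 0$ follows from $c \in \conv\dark_A(C)$ just as you argue; for (2), your final ``symmetric bootstrap'' --- apply part (1) to $C'$ with the roles of $c,c'$ reversed --- is verbatim the paper's proof. One caveat: in your contradiction setup for (1) you wrote ``if it is positive on all of $\dark_A(C)$'' where you need \emph{negative} (and correspondingly you should \emph{maximize} $\phi$ over $\dark_A(C)$, not minimize it, to locate the strict witness $p_0$); with that sign corrected the argument goes through cleanly.
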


\begin{proof}\phantom{ }
	
	\begin{enumerate}
		\item Consider the hyperplane $H$ with outer normal $c - c'$ through $c$, oriented such that $c'$ is on the negative side. Since $c\in\conv\dark_A(C)$ there has to be a darkest point $d\in \dark_A(C)$ in the non-negative halfspace of $H$ (it might be in $H$). Then $\|c - d\| < \|c' -d\|$ and by monotonicity $f(\|c - d\|) > f(\|c' -d\|)$. The potentials $U(d,C')$ and $U(d,C)$ differ by $f(\|c'-d\|)-f(\|c-d\|)$, therefore the above implies 
		$$
		P(C') \leq U(d, C')
		< U(d, C) = P(C).
		$$
		\item Suppose $C'\subset\conv\dark_A(C')$. Then we can apply 1. to $C'$ with the roles of $c,c'$ reversed. But this would give $P(C') < P(C) < P(C'),$ which is a contradiction.
	\end{enumerate}
\end{proof}
Optimization methods which only consider single components (like pattern search) or move single configuration points therefore possibly converge to a configuration contained in the convex hull of the darkest points which is not locally optimal. Therefore it seems reasonable to only use optimization methods which are able to move several points at once. 
Another conclusion is the following, which seems to suggest that the number of optimization variables can be reduced to only $m-1$ vectors.
\begin{corollary}
	For given points $C'$ with $|C'| = N-1$ there is \emph{at most} one point $c$ such that $\{c\}\cup C'\subset\conv\dark_A(\{c\}\cup C')$.
\end{corollary}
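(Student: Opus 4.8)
The plan is to argue by contradiction and let Proposition~\ref{prop:unstable} do all the work. Suppose there were two \emph{distinct} points $c_1 \neq c_2$, each with the stated property, and put $C_1 = \{c_1\}\cup C'$ and $C_2 = \{c_2\}\cup C'$, so that $C_i \subset \conv\dark_A(C_i)$ for $i=1,2$ by hypothesis. The key observation is purely combinatorial: $C_2$ arises from $C_1$ by deleting one copy of $c_1$ and inserting $c_2$, i.e.\ $C_2 = C_1\cup\{c_2\}\setminus\{c_1\}$ as multisets — this holds regardless of whether $c_1$ or $c_2$ already occur among the points of $C'$, since removing one copy of $c_1$ from $\{c_1\}\cup C'$ leaves exactly $C'$.

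Given this, I would simply apply Proposition~\ref{prop:unstable} to the configuration $C_1$ with the removed point $c := c_1$ (which indeed lies in $C_1$) and the inserted point $c' := c_2 \neq c_1$. The hypothesis of the proposition is precisely $C_1\subset\conv\dark_A(C_1)$, which we assumed, so part~(2) of the proposition gives $C_2 \nsubseteq \conv\dark_A(C_2)$. This contradicts the assumption that $c_2$ also has the property, namely $C_2\subset\conv\dark_A(C_2)$; hence at most one such point $c$ can exist. If one prefers a symmetric formulation, part~(1) of the proposition works equally well: applied to $C_1$ it gives $P(C_2) < P(C_1)$, while applied to $C_2$ it gives $P(C_1) < P(C_2)$, which is absurd.

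I do not expect a real obstacle here: the whole argument is a one-line consequence of Proposition~\ref{prop:unstable}. The only thing needing a moment of care is the multiset bookkeeping — verifying that $C_1\cup\{c_2\}\setminus\{c_1\} = \{c_2\}\cup C'$ and that $c_1\in C_1$, so that the proposition is genuinely applicable with $c=c_1$, $c'=c_2$ — and both of these are immediate.
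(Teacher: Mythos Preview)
Your argument is correct and is precisely the intended one: the paper states the result as an immediate corollary of Proposition~\ref{prop:unstable} without writing out a separate proof, and your contradiction via part~(2) (or equivalently the symmetric application of part~(1)) is exactly how it follows. The multiset bookkeeping you flag is the only thing to check, and you handle it correctly.
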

We can use Theorem \ref{thm:necessary_condition} to study the structure of the darkest points even more. First, we discuss a way to find certificates for $p\notin\dark_A(C)$.
\begin{lemma}
	\label{lem:shadow}
	Let $C$ be a configuration and $p\in\R^n$ be an arbitrary point.
	Let  
	\[
	N(p,C) = \{p + v \ : \ v\neq 0 \text{ and } v^\top w \geq 0 \text{ for all } w\in \cone\{p-c \ : \ c\in C\}\}.
	\]
	\begin{enumerate}
		\item For all $q\in N(p, C)$ we have $U(q, C) < U(p, C)$,
		\item if $N(p, C) \cap A \neq \emptyset$ then $p\notin\dark_A(C)$.
	\end{enumerate}
\end{lemma}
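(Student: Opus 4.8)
The plan is to prove part (1) first and then derive part (2) as an immediate consequence. For part (1), fix $q = p + v \in N(p,C)$ with $v \neq 0$. I want to show that moving from $p$ to $q$ strictly increases the distance to every lamp $c \in C$, i.e. $\|q - c\| > \|p - c\|$ for all $c$; then monotonicity of $f$ gives $f(\|q-c\|) < f(\|p-c\|)$ for each summand, and summing over $c \in C$ yields $U(q,C) < U(p,C)$. (One should note $C$ is finite and nonempty so the strict inequality survives the sum.)

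The key computation is the distance comparison. Write $\|q - c\|^2 = \|p + v - c\|^2 = \|p-c\|^2 + 2 v^\top (p - c) + \|v\|^2$. Since $p - c \in \cone\{p - c' : c' \in C\}$, the defining property of $N(p,C)$ gives $v^\top (p-c) \ge 0$. Hence $\|q-c\|^2 \ge \|p-c\|^2 + \|v\|^2 > \|p-c\|^2$ because $v \neq 0$. Taking square roots gives the claimed strict inequality, and part (1) follows. For part (2): if there exists $q \in N(p,C) \cap A$, then $q \in A$ and by part (1) $U(q,C) < U(p,C)$, so $P_{f,A}(C) = \inf_{p' \in A} U(p',C) \le U(q,C) < U(p,C)$, which means $p$ does not attain the infimum, i.e. $p \notin \dark_A(C)$.

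I do not expect any serious obstacle here; the argument is a short direct computation. The only point requiring a moment's care is ensuring the strictness: we need $v \neq 0$ (guaranteed in the definition of $N(p,C)$) to get $\|v\|^2 > 0$, and we need $C$ nonempty to conclude the summed inequality is strict — both are in force under the standing assumptions. It is also worth remarking that the inequality $v^\top(p-c)\ge 0$ need only hold for the generators $p - c$ of the cone, which is exactly what the definition provides, so no separate argument about general cone elements is needed for this particular conclusion (though the definition is stated for all $w$ in the cone, which is a stronger and harmless hypothesis).
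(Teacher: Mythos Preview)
Your proposal is correct and follows essentially the same approach as the paper: expand $\|q-c\|^2 = \|p-c\|^2 + 2v^\top(p-c) + \|v\|^2$, use $v^\top(p-c)\ge 0$ and $v\neq 0$ to get a strict distance increase, then apply the strict monotonicity of $f$; part (2) is immediate. The paper's proof is slightly terser, but the argument is the same.
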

\begin{proof}
	Write $q = p + v\in N(p, C)$ with $v\neq 0$.
	Then for all $c\in C$ we have
	$$|c - (p+v)|^2 = |c - p|^2 + 2(p-c)^\top  v + |v|^2 > |c-p|^2.$$
	Since $f$ is strictly monotone decreasing, we have $U(q, C) < U(p, C)$.
	From this, the second claim follows immediately.
\end{proof}

The above definition of $N(p,C)$ of a point $p$ 
contains only points at which the potential is strictly smaller than at $p$ itself, as we just showed. We think this object will be useful beyond the scope of the previous lemma and subsequent theorem, but in the present work we only need it here. 

If we recall the visualization of the polarization problem as placing light sources $C$ to illuminate $A$ the above definition of $N(p,C)$ of a point $p$ contains only points that are illuminated less than $p$ itself. It is (by cone duality) somewhat related to the idea of a physical shadow (which would be resembled most closely by $-\cone\{p-c:c \in C\}$). With this we prove the following result which further restricts the location of the darkest points:

\begin{theorem} \label{thm:darkest:points}
	Let $C$ be a feasible configuration for \eqref{eq:polarization}. Then the points of $\dark_A(C)$ are either in the interior of $\conv(C)$ or in $\delta A$, i.e. $\dark_A(C) \subset \inter\conv(C) \cup \delta A$.
	Moreover, if $C$ is locally optimal for \eqref{eq:polarization}, then $\dark_A(C)\cap \delta A \neq \emptyset$. 
\end{theorem}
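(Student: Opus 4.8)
The plan is to prove the two claims separately. For the first claim, $\dark_A(C)\subset\inter\conv(C)\cup\delta A$, I would argue by contraposition: suppose $p\in A$ lies neither in $\inter\conv(C)$ nor on $\delta A$, so $p\in\inter A$ but $p\notin\inter\conv(C)$. I want to produce a point $q\in A$ with $U(q,C)<U(p,C)$, which shows $p\notin\dark_A(C)$. This is exactly what Lemma \ref{lem:shadow} is built for: it suffices to show $N(p,C)\cap A\neq\emptyset$. The key geometric observation is that if $p\notin\inter\conv(C)$, then $p$ can be separated from $\conv(C)$ in a weak sense — there is a nonzero $v$ with $v^\top(c-p)\le 0$ for all $c\in C$, equivalently $v^\top w\ge 0$ for all $w\in\cone\{p-c:c\in C\}$ (if $p\notin\conv(C)$ use a strict separating hyperplane; if $p\in\delta\conv(C)$ use a supporting hyperplane). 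Hence $p+tv\in N(p,C)$ for all $t>0$. Since $p\in\inter A$, for $t$ small enough $p+tv\in A$, so $N(p,C)\cap A\neq\emptyset$ and Lemma \ref{lem:shadow}(2) gives $p\notin\dark_A(C)$. Note the degenerate case $\conv(C)=\{p\}$ (all configuration points coincide with $p$) must be handled: then $\cone\{p-c:c\in C\}=\{0\}$, so $N(p,C)=\R^n\setminus\{p\}$ and again meets $A$ since $p\in\inter A$.

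For the second claim, suppose $C$ is locally optimal but $\dark_A(C)\cap\delta A=\emptyset$. By the first claim, every darkest point then lies in $\inter\conv(C)$, so $\dark_A(C)\subset\inter\conv(C)$, and therefore $\conv\dark_A(C)\subseteq\conv(C)$; in fact, since $\dark_A(C)$ is compact (as shown in the proof of Theorem \ref{thm:necessary_condition}) and contained in the open set $\inter\conv(C)$, we have $\conv\dark_A(C)$ is a compact subset of $\inter\conv(C)$. On the other hand, by Theorem \ref{thm:necessary_condition}, local optimality forces $C\subset\conv\dark_A(C)$. Combining these, every point $c\in C$ lies in the interior of $\conv(C)$. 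But a convex hull of finitely many points always has at least one vertex (an extreme point), and a vertex of $\conv(C)$ must be one of the points of $C$ and cannot lie in $\inter\conv(C)$ — contradiction. Hence $\dark_A(C)\cap\delta A\neq\emptyset$.

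I expect the main obstacle to be the careful handling of the separation argument in the first claim, specifically the boundary and degenerate cases: when $p$ lies on $\delta\conv(C)$ one only gets a supporting (not strictly separating) hyperplane, and one must check that the resulting $v$ still produces a genuine direction into $N(p,C)$, i.e. that $v\ne 0$ and $p+tv\ne p$; and when $\conv(C)$ is lower-dimensional or collapses to a point, the cone $\cone\{p-c:c\in C\}$ degenerates and $N(p,C)$ must be described directly. Everything else — the compactness of $\dark_A(C)$, the existence of a vertex of a polytope, and the invocation of Theorems \ref{thm:necessary_condition} and Lemma \ref{lem:shadow} — is routine once the separation step is set up correctly.
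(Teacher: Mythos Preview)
Your proposal is correct and follows essentially the same approach as the paper: for the first claim the paper also separates $p\notin\inter\conv(C)$ from $C$ by a hyperplane through $p$, uses the normal direction to enter $N(p,C)$, and invokes Lemma~\ref{lem:shadow} together with $p\in\inter A$; for the second claim the paper likewise combines Theorem~\ref{thm:necessary_condition} with the first part to obtain $C\subseteq\conv\dark_A(C)\subseteq\inter\conv(C)$ and derives the same finite-set contradiction. Your added discussion of the degenerate and boundary cases is more explicit than the paper's, but does not change the argument.
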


\begin{proof}    
	Let $p \in \dark_A(C)$ and assume $p \notin \inter\conv(C)$. 
	Furthermore, let $N(p, C)$ be defined as in Lemma \ref{lem:shadow}. We can find a hyperplane $H = \{x \in \R^n\ :\ a^\top  x = \beta \}$ through $p$ separating $C$ from $p$, in particular $a^\top c \leq \beta$ for all $c\in C$. Then for all $c \in C$
	\[
	a^\top  (p-c) = a^\top p - a^\top c = \beta - a^\top c \geq 0,
	\]
	which shows that $p + \lambda a \in N(p,C)$ for arbitrary $\lambda >0$. If $p \in \inter A$, so is $p + \lambda a$ for $\lambda$ sufficiently small. Then $A \cap N(p,C) \neq \emptyset$ in contradiction to Lemma \ref{lem:shadow}. Thus $p \in \partial A$ as claimed.

	In addition, if $C$ is also locally optimal for \eqref{eq:polarization} by Theorem \ref{thm:necessary_condition} we immediately obtain that $C\subset\conv\dark_A(C)$. Now, assume $\dark_A(C)\cap \delta A = \emptyset$, then as seen above $\dark_A(C) \subseteq \inter \conv C$ and we obtain 
	$$C \subseteq \conv \dark_A(C) \subseteq \inter \conv C,$$
	which is a contradiction since $C$ is finite.
\end{proof}

\begin{figure}[H]
	\centering
	\includegraphics[scale=0.5, trim = 0 0 0 0.5cm, clip]{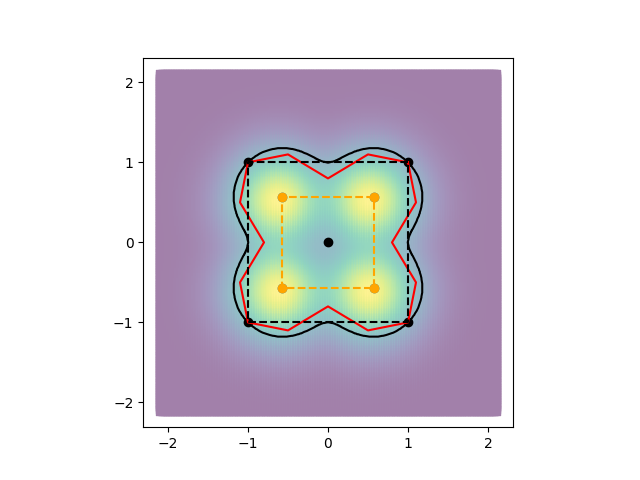}
	\caption{Illustration of Theorems \ref{thm:necessary_condition} and \ref{thm:darkest:points}. $A$ is depicted in red, $\dark_A$ in black and the configuration $C$ in orange. The dashed lines depict the convex hulls $\conv C$ and $\conv \dark_A(C)$, whereas the black line depicts all points $p\in \R^2$, such that $U(p,C)=P(C)$.}
	\label{fig:illustration_of_proofs}
\end{figure}

To summarize, locally optimal configurations $C$ of \eqref{eq:polarization} and its corresponding darkest points $\dark_A(C)$ share a similar containment property as is illustrated in Figure \ref{fig:illustration_of_proofs}.

\section{An MIP approach to polarization}
\label{sec:mips}
The current section is dedicated to the development of two hierarchies of mixed-integer linear programs (MIP) that approximate the maximal polarization of a compact set $A$ with respect to a monotonically decreasing and continuous function $f:\ \R_+ \rightarrow \R_+$. The MIP, that computes the lower bounds is constructive, i.e. solutions to this MIP are configurations whose polarization is lower bounded by the value of the MIP. The actual polarization of these configurations may very well exceed this lower bound by a significant margin, cf. Figure \ref{fig:convergence} for some numerical evidence.

\bigskip

First we give an equivalent description of problem \eqref{eq:polarization}. For this we observe that by Theorem \ref{thm:necessary_condition} any locally optimal point configuration is necessarily supported on $\conv(A)$. Furthermore we can get rid of the infimum by adding new constraints. The resulting optimization problem is then

\begin{align}
	\mathcal{P}_{f}(A)= \max_{x,C}\ & x  && \label{prob:original}\\
	& C\in\multibinom{\conv(A)}{N} \notag \\
	& x \leq U_{f, A}(p, C) && \text{ for all } p\in A, \notag
\end{align}
where $\multibinom{X}{N}$ describes the set of all multisets 
of size $N$ with elements in $X$. It is now clear, that the $\sup$ is actually a $\max$, since the feasible region can easily be made compact by bounding $x$ from below (e.g. $x \geq 0$) without changing the value of the program. 
\bigskip

\subsection{MIP Hierarchies}\label{subsec:mips}
We observe that Problem \eqref{prob:original} is an optimization problem with finitely many variables (namely $x,C$), but infinitely many constraints - it is a semiinfinite program (SIP) - and therefore not solvable using standard solvers.
In the remainder of this section we introduce two hierarchies of (tractable) MIPs, that approximate $\mathcal{P}(A)$ from above and below (see Theorem \ref{thm:hierarchy}). For this we make use of the following concept of functions which \enquote{control} the difference of two values of $f$.

\begin{definition}
We call a family of functions $g_{c, p}:\R_+\rightarrow \R_+$ for $c\in \conv(A)$, $p \in A$ a \emph{family of control functions} (with respect to $f$, $A$) if for all $c \in \conv(A)$, $p\in A$:
\begin{enumerate}
	\item $g_{c, p}(0) = 0$,
	\item $g_{c, p}$ is continuous and non-decreasing,
	\item $|f(\|c - p\|) - f(\|c' - p\|)| \leq g_{c, p}(\|c-c'\|)$ for all $c'\in \conv(A)$,
	\item $|f(\|c- p\|) - f(\|c- p'\|)| \leq g_{c, p}(\|p-p'\|)$ for all $p\in A$,
\end{enumerate}
where $\|\cdot\|$ denotes the standard Euclidean norm.
\end{definition}

Note that $f$ is related to a function $K$ taking two points $c,p$ as arguments: $K(c,p) = f(\|c-p\|)$. A family of control functions allows us to control the way $K$ changes as we vary either $c$ or $p$.

This control will be an important ingredient of the proof of Theorem \ref{thm:hierarchy}. For continuous functions this is related to bounding the slope of $K$ as can be illustrated by the following example:
Suppose the function $K(c, \cdot) = f(\|c - \cdot\|)$ is Lipschitz-continous with Lipschitz constant $L$ for all $p\in A$. Then, $g_{c, p}(\eps) = L\cdot \eps$ is a valid control function for $f$.

However, applying global Lipschitz-continuity is not a very precise approximation as it ignores local information around specific points $c,p$. Therefore we provide a more suitable family of control functions. 

\begin{proposition}
For $f$ monotonously decreasing and continuous the following is a family of control functions:
$$g_{c, p}(\varepsilon) = \max (\hat g_{c, p}(\varepsilon), \hat g_{c, p}(-\varepsilon))$$
where
$$\hat g_{c, p}(x) = \begin{cases}f(0) - f(\|c - p\|) \text{ if } x < - \|c- p\|\\ \left|f(\|c-p\| + x) - f(\|c-p\|) \right|\text{ otherwise}\end{cases}$$
\end{proposition}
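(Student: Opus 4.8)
The plan is to reduce everything to the behaviour of the single real variable function $s \mapsto f(s)$ near $s = r$, where for fixed $c \in \conv(A)$ and $p \in A$ we abbreviate $r := \|c-p\| \ge 0$. So first I would record the elementary shape of $\hat g_{c,p}$, and then use the triangle inequality to reduce properties (3) and (4) to a single one-dimensional inequality.

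Unwinding the definition with the help of $f$ being decreasing on $\R_+ = [0,\infty)$: for $x \ge 0$ we have $f(r+x) \le f(r)$, hence $\hat g_{c,p}(x) = f(r) - f(r+x) \ge 0$, and this is non-decreasing in $x$; for $-r \le x \le 0$ we have $0 \le r+x \le r$, so $f(r+x) \ge f(r)$ and $\hat g_{c,p}(x) = f(r+x) - f(r) \ge 0$ is non-increasing in $x$, with $\hat g_{c,p}(-r) = f(0) - f(r)$, which is exactly the constant value prescribed on $(-\infty, -r)$. Hence $\hat g_{c,p}$ is continuous on all of $\R$, vanishes at $0$, is non-decreasing on $[0,\infty)$, is non-increasing on $(-\infty,0]$, and is bounded by $f(0) - f(r)$ on both rays. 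Properties (1) and (2) then follow at once: $g_{c,p}(0) = \hat g_{c,p}(0) = 0$, and since $\eps \mapsto \hat g_{c,p}(\eps)$ and $\eps \mapsto \hat g_{c,p}(-\eps)$ are both continuous and non-decreasing on $\R_+$, so is their pointwise maximum $g_{c,p}$.

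For (3) and (4) I would invoke the triangle inequality. If $\|c-c'\| \le \eps$ then $\bigl|\, \|c'-p\| - r \,\bigr| \le \eps$, so $\|c'-p\| = r+t$ for some $t$ with $|t| \le \eps$, and automatically $r + t = \|c'-p\| \ge 0$; likewise $\|c-p'\| = r+t$ with $|t| \le \eps$ and $r+t \ge 0$ whenever $\|p-p'\| \le \eps$. Thus both (3) and (4) reduce to the claim that for every $t \in \R$ with $|t| \le \eps$ and $r + t \ge 0$ one has $|f(r+t) - f(r)| \le g_{c,p}(\eps)$. To prove it, note that $r + t \ge 0$ puts us in the ``otherwise'' branch, so $|f(r+t) - f(r)| = \hat g_{c,p}(t)$. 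If $t \ge 0$, monotonicity of $\hat g_{c,p}$ on $[0,\infty)$ and $t \le \eps$ give $\hat g_{c,p}(t) \le \hat g_{c,p}(\eps) \le g_{c,p}(\eps)$. If $t < 0$, then $-r \le t < 0$; when $\eps \le r$ the point $-\eps$ also lies in $[-r,0]$, and monotonicity of $\hat g_{c,p}$ there together with $-\eps \le t$ gives $\hat g_{c,p}(t) \le \hat g_{c,p}(-\eps)$, whereas when $\eps > r$ we have $\hat g_{c,p}(-\eps) = f(0) - f(r) \ge f(r+t) - f(r) = \hat g_{c,p}(t)$ because $r+t \ge 0$ forces $f(r+t) \le f(0)$. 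In both cases $\hat g_{c,p}(t) \le \hat g_{c,p}(-\eps) \le g_{c,p}(\eps)$, which proves the claim.

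The only genuinely delicate point is that last case distinction: once the perturbation $\eps$ is large enough that $c'$ (or $p'$) may lie on the far side of $p$ (or $c$), evaluating $\hat g_{c,p}$ at $-\eps$ would require $f$ at a negative argument, and this is exactly what the clamped first branch $f(0) - f(r)$ of $\hat g_{c,p}$ is designed to absorb. Checking that it does the job is the main step; the rest is bookkeeping with monotonicity and the triangle inequality.
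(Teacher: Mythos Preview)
Your proof is correct and follows essentially the same route as the paper's: analyse the shape of $\hat g_{c,p}$ on the three intervals $(-\infty,-r]$, $[-r,0]$, $[0,\infty)$ to get properties (1) and (2), then reduce (3) and (4) to a one-dimensional estimate via the triangle inequality and the monotonicity of $f$. Your case split $t\ge 0$ versus $t<0$ corresponds exactly to the paper's split $\|c-p\|\le\|c-p'\|$ versus $\|c-p\|\ge\|c-p'\|$.

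One point worth noting: your explicit subcase $\eps>r$ in the branch $t<0$ is actually more careful than the paper. The paper writes $f(\|c-p\|-\|p-p'\|)$ and identifies this with $\hat g(-\|p-p'\|)$, which tacitly assumes $\|p-p'\|\le\|c-p\|$; when $\|p-p'\|>\|c-p\|$ that expression involves $f$ at a negative argument and one must instead fall back on the clamped value $f(0)-f(r)$, precisely as you do. So your handling of this edge case is cleaner.
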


\begin{proof}
We fix $c, p$ and write $g = g_{c, p}$ and $\hat g = \hat g_{c, p}.$
Clearly $g(0) = \hat g(0) = 0$. Since $f$ is continuous, so is $g$.

For $x\in (-\infty, -\|c-p\|)$ the function $\hat g(x)$ is constant. For $x\in (-\|c-p\|, 0)$ we have
$$\hat g(x) = f(\|c-p\| + x) - f(\|c-p\|)$$
which is decreasing since $f$ is decreasing.
For $x\in (0, \infty)$ we have
$$\hat g(x) = f(\| c-p\|) - f(\|c-p\| + x)$$
which is increasing since $f$ is decreasing.
Overall $g(\eps) = \max(\hat g(\eps), \hat g(-\eps))$ is an increasing function on $\R_+$.

By symmetry, it is sufficient to prove that $g$ provides an upper bound for $\Delta = |f(\|c-p\|) - f(\|c-p'\|)|$ for all $p'\in \conv(A)$. To this end, we use the triangle inequalities 
$$\|c-p\| - \|p'-p\| \leq \|c-p'\| \leq \|c-p\| + \|p' - p\|$$ 
and that $f$ is a decreasing function. Then, on the one hand if $\|c-p\| \leq \|c-p'\|$, we have
\begin{align*}
	\Delta &= f(\|c-p\|) - f(\|c-p'\|) \\
	&\leq f(\|c-p\|) - f(\|c-p\| + \|p'-p\|) = \hat g(\|p'-p\|) \leq g(\|p'-p\|).
\end{align*}
On the other hand, if $\|c-p\| \geq \|c-p'\|$, we obtain
\begin{align*}
	\Delta &= - f(\|c-p\|) + f(\|c-p'\|) \\
	&\leq - f(\|c-p\|) + f(\|c-p\| - \|p-p'\|) = \hat g(-\|p-p'\|) \leq g(\|p-p'\|).
\end{align*}
\end{proof}

For explicit computations we need to discretize two aspects of the problem. Firstly, we discretize the set of possible point configurations. For this we choose a finite sample $\Lambda \subset \conv(A)$ and only optimize over
\begin{equation} \label{eq:sample:lights}
C \in \multibinom{\Lambda}{N}.
\end{equation}
Secondly, we replace the infinite number of constraints, parameterized by $A$, by a finite subcollection. For this we again choose a finite sample $\Gamma \subset A$, and only consider the inequalities
\begin{equation} \label{eq:sample:constraints}
x \leq U(p,C) \text{ for all } p \in \Gamma.
\end{equation}
However, this naively sampled problem is not necessarily connected to the original problem, since we enforce only a subset of the infinitely many constraints and allow only a finite number of configurations. Either one of these changes would provide valid bounds but they unfortunately work in different directions. We will now show how to overcome this problem by utilizing the above family of control functions to obtain lower and upper bounds on the original problem.



Let us first consider lower bounds on \eqref{prob:original}. It is clear that we can restrict the choice of configurations to be supported on a finite sample $\Lambda$ of $\conv(A)$ as in \eqref{eq:sample:lights} and obtain a program that computes a lower bound. 

Discretizing the constraints is the harder part, since removing constraints lets the maximum grow. The following lemma shows how a slight variation of discretized constraints for some fintie sample $\Gamma$ of $A$ imply the validity of all of the infinitely many original constraints.
\begin{lemma}
\label{lemma:lower_bound}
Let $g_{c, p}$ be a family of control functions.
Let $\varepsilon > 0$, $\Lambda$ be an arbitrary finite sample of $\conv(A)$ and $\Gamma$ be an $\varepsilon$-net of $A$. Furthermore, suppose $x\in\R, C\subset\multibinom{\Lambda}{N}$  satisfy
$$x \leq \sum_{c\in\Lambda} \1_C(c) \cdot \left(f(\|c-p\|) - g_{c, p}(\varepsilon)\right) \text{ for all } p\in \Gamma.$$
Then,
$$x\leq \sum_{c\in\Lambda} \1_C(c)\cdot f(\|c-p\|) \text{ for all } p\in A.$$
\end{lemma}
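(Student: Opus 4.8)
The plan is to take an arbitrary point $p\in A$ and estimate $U(p,C)$ from below using the discretized constraint at a nearby sample point, where the discrepancy is exactly what the control functions bound. First I would use the assumption that $\Gamma$ is an $\varepsilon$-net of $A$: for the given $p\in A$ there exists $q\in\Gamma$ with $\|p-q\|\le\varepsilon$. The hypothesis then gives
\[
x \;\le\; \sum_{c\in\Lambda}\1_C(c)\cdot\bigl(f(\|c-q\|) - g_{c,q}(\varepsilon)\bigr).
\]
Now I would pass from $q$ back to $p$ term by term. For each $c$ with $\1_C(c)=1$, property (4) of the family of control functions (monotonicity of $g$ and the bound on varying the second argument), together with $\|p-q\|\le\varepsilon$ and property (2) (non-decreasing), yields
\[
f(\|c-q\|) - g_{c,q}(\varepsilon) \;\le\; f(\|c-q\|) - g_{c,q}(\|p-q\|) \;\le\; f(\|c-p\|),
\]
since $|f(\|c-q\|)-f(\|c-p\|)|\le g_{c,q}(\|p-q\|)$ forces $f(\|c-q\|)-g_{c,q}(\|p-q\|)\le f(\|c-p\|)$. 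Summing this inequality over the (at most $N$) active indices $c\in\Lambda$ with $\1_C(c)=1$ gives
\[
x \;\le\; \sum_{c\in\Lambda}\1_C(c)\cdot f(\|c-p\|) \;=\; U(p,C),
\]
and since $p\in A$ was arbitrary this is exactly the claimed conclusion.

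The argument is essentially a bookkeeping exercise once the right control-function property is invoked, so there is no serious obstacle; the only point requiring a little care is making sure the correct property (varying $p$, i.e. property (4)) is used rather than property (3), and that the net parameter $\varepsilon$ feeds into $g_{c,q}$ via its monotonicity so that $g_{c,q}(\|p-q\|)\le g_{c,q}(\varepsilon)$. One should also note that the inequality is applied only at the active support points of the multiset $C$, so the indicator $\1_C$ simply selects which terms appear and the bound is preserved under the (finite) summation. I would present these three displayed inequalities in order — net property, term-wise comparison, summation — and conclude.
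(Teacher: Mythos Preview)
Your proof is correct and follows essentially the same approach as the paper: pick a nearest net point $q=n(p)\in\Gamma$, apply the discretized hypothesis there, and use property~(4) together with the monotonicity of $g_{c,q}$ to pass from $q$ back to $p$ term by term. The paper organizes the same estimate as a three-way decomposition of $\sum_{c}\1_C(c)f(\|c-p\|)$ rather than a term-wise chain of inequalities, but the ingredients and logic are identical.
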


\begin{proof}
Let $p\in A$ be arbitrary and $n(p)=\argmin_{\bar{p}\in \Gamma}\{\|p-\bar{p}\|\}$ denote the closest sample point to $p\in A$. Note that $\|p-n(p)\| < \varepsilon$ since $\Gamma$ is an $\varepsilon$-net.
Then,
\begin{align*}
	\sum_{c\in \Lambda} \1_C(c) f(\|c-p\|) & = \sum_{c\in \Lambda} \1_C(c)\cdot\left(f(\|c-p\|) - f(\|c-n(p)\|)\right)\\
	& \qquad +\sum_{c\in \Lambda} \1_C(c) \cdot \left(f(\|c-n(p)\|) - g_{c, n(p)}(\varepsilon)\right)\\
	& \qquad +\sum_{c\in\Lambda}\1_C(c)\cdot g_{c, n(p)}(\varepsilon)\\
	& \geq - \sum_{v\in \Lambda} \1_C(c)\cdot g_{c, n(p)}(\|p - n(p)\|)\\
	& \qquad + x\\
	& \qquad + \sum_{c\in\Lambda}\1_C(c)\cdot g_{c, n(p)}(\varepsilon),
\end{align*}
which is larger than $x$ since $g_{c,n(p)}$ is non-decreasing and $\|p-n(p)\| < \varepsilon$.
\end{proof}

Conversely, if we consider upper bounds on \eqref{prob:original}, we now cannot simply choose a finite sample $\Lambda$ of $\conv(A)$ to approximate the above SIP.
Indeed this would restrict the set of feasible solutions of \eqref{prob:original} and thereby lower the maximum instead. Again, the following lemma provides a way around this problem using a variation of the constraints.

\begin{lemma}
\label{lemma:upper_bound}
Let $g_{c, p}$ be a family of control functions.
Let $\eps > 0$ and $\Lambda$ be an $\eps$-net of $\conv(A)$.
Furthermore, suppose $C\in\multibinom{\conv(A)}{N}$ and $x$ satisfy
$$x \leq U(p, C) = \sum_{c\in C} f(\|c - p\|) \text{ for all }p\in \Gamma.$$
Then, there exists a configuration $C'\in\multibinom{\Lambda}{N}$ such that
$$x \leq \sum_{c\in C'} f(\|c - p\|) + g_{c, p}(\varepsilon) \text{ for all }p\in \Gamma.$$
\end{lemma}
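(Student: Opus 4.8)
The plan is to build $C'$ by rounding each point of $C$ to a nearby element of the net $\Lambda$, and then to bound the resulting change in the potential with property~3 of the family of control functions. Since $\Lambda$ is an $\eps$-net of $\conv(A)$ and every point of $C$ lies in $\conv(A)$, I would, for each of the $N$ points $c$ of the multiset $C$ (counted with multiplicity), fix a point $\pi(c)\in\Lambda$ with $\|c-\pi(c)\|<\eps$. Collecting these choices with their multiplicities yields a multiset $C'=\{\pi(c)\ :\ c\in C\}\in\multibinom{\Lambda}{N}$, together with a bijective pairing between the points of $C$ and those of $C'$ in which each pair is at distance less than $\eps$.

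Next, fix $p\in\Gamma$. For a matched pair $(c,\pi(c))$, property~3 of the control functions, applied with base point $\pi(c)\in\conv(A)$ and comparison point $c\in\conv(A)$, together with the fact that $g_{\pi(c),p}$ is non-decreasing, gives
$$f(\|c-p\|)-f(\|\pi(c)-p\|)\ \le\ \bigl|f(\|\pi(c)-p\|)-f(\|c-p\|)\bigr|\ \le\ g_{\pi(c),p}(\|\pi(c)-c\|)\ \le\ g_{\pi(c),p}(\eps).$$
Summing over the $N$ matched pairs and combining with the hypothesis $x\le\sum_{c\in C}f(\|c-p\|)$ yields
$$x\ \le\ \sum_{c\in C}f(\|c-p\|)\ \le\ \sum_{c'\in C'}\bigl(f(\|c'-p\|)+g_{c',p}(\eps)\bigr),$$
which is precisely the asserted inequality for this $p$; since $p\in\Gamma$ was arbitrary, the lemma follows.

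There is no deep obstacle here; the parts requiring care are purely bookkeeping. One must invoke property~3 with the \emph{rounded} point $\pi(c)$ in the subscript of the control function (not the original point $c$), so that the error term appearing in the conclusion is the one indexed by $c'\in C'$, as stated. One must also track multiplicities so that $C'$ is genuinely a size-$N$ multiset and the pairing is a bijection. Finally, note that $\Gamma\subset A$ enters only as the index set of the enforced constraints and is otherwise inert, so the same argument would go through verbatim with $A$ in place of $\Gamma$.
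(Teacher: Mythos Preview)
Your proposal is correct and follows essentially the same approach as the paper: round each $c\in C$ to a nearby net point (the paper uses $n(c)=\argmin_{c'\in\Lambda}\|c-c'\|$, you use any $\pi(c)$ with $\|c-\pi(c)\|<\eps$), then apply property~3 of the control functions with the \emph{rounded} point in the subscript and monotonicity to bound the error by $g_{\pi(c),p}(\eps)$. Your explicit attention to the multiset bookkeeping and to indexing the control function at $\pi(c)$ rather than $c$ mirrors exactly what the paper does.
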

\begin{proof}
Let $C' = \{\{n(c) \ : \ c\in C\}\}$ where $n(c) = \argmin_{c'\in\Lambda} \|c - c'\|$.
Then
\begin{align*}
	\sum_{c\in C'} f(\|c - p\|) + g_{c, p}(\varepsilon) &= \sum_{c\in C} f(\|n(c) - p\|) - f(\|c - p\|)\\
	&\qquad + \sum_{c\in C} f(\|c - p\|) + \sum_{c\in C} g_{n(c), p}(\varepsilon)\\
	&\geq -\sum_{c\in C} g_{n(c), p}(\|c - n(c)\|) + x + \sum_{c\in C} g_{n(c), p}(\varepsilon) \geq x,
\end{align*}
where the last inequality holds since $g_{n(c),p}$ is non-decreasing and $\|c-n(c)\|<\eps$ as $\Lambda$ is an $\eps$-net of $\conv(A)$.
\end{proof}

Now we can prove the main result of this section.

\begin{theorem}
\label{thm:hierarchy}
Let $\eps_\Lambda, \eps_\Gamma > 0$ and $\Lambda$ be an $\eps_\Lambda$-net of $\conv(A)$ and $\Gamma$ be an $\eps_\Gamma$-net of $A$.
Furthermore, let $g_{c, p}$ be a family of control functions. Then we have the following:
\begin{subequations}
	\begin{align}
		\max\ & x \label{prob:lower_bound}\\
		& y \in \{0,\ldots , N\}^\Lambda \notag \\
		& \1^\top  y = N \notag\\
		& x \leq \sum_{c\in\Lambda} y_c\cdot (f(\|c - p\|) - g_{c, p}(\eps_\Gamma)) && \text{ for all } p\in\Gamma \notag\\
		\leq \max\ & x \label{prob:lower_bound_inter}\\
		& y \in \{0,\ldots , N\}^\Lambda \notag\\
		& \1^\top  y = N \notag\\
		& x \leq \sum_{v\in \Lambda} y_c\cdot f(\|c - p\|) && \text{ for all } p\in A \notag\\
		\leq \mathcal{P}(A) \label{prob:pol*}\\
		\leq \max\ & x \label{prob:upper_bound_inter}\\
		& C \in\multibinom{\conv(A)}{N} \notag\\
		& x \leq \sum_{c\in C} f(\|c - p\|) &&\text{ for all } p\in \Gamma \notag\\
		\leq \max\ & x&\label{prob:upper_bound}\\
		& y\in\{0,\ldots , N\}^\Lambda \notag\\
		& \1^\top  y = N \notag\\
		& x \leq \sum_{c\in\Lambda} y_c\cdot (f(\|c - p\|) + g_{c, p}(\eps_\Lambda)) && \text{ for all } p\in\Gamma\notag
	\end{align}
\end{subequations}
\end{theorem}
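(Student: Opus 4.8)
The plan is to establish the chain of five comparisons in Theorem~\ref{thm:hierarchy} one inequality at a time, reading from top to bottom. Each of the four inequalities falls into one of two categories: a ``lens'' inequality, where we compare an auxiliary semiinfinite program \eqref{prob:lower_bound_inter} or \eqref{prob:upper_bound_inter} with $\mathcal{P}(A)$, and a ``discretization'' inequality, where one of Lemmas~\ref{lemma:lower_bound} and \ref{lemma:upper_bound} is invoked to pass between a sampled MIP and the corresponding semiinfinite program. I would phrase the proof as four short paragraphs, one per inequality.

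First, \eqref{prob:lower_bound}$\leq$\eqref{prob:lower_bound_inter}: given a feasible $(x,y)$ for \eqref{prob:lower_bound}, set $C$ to be the multiset on $\Lambda$ with multiplicities $y_c$. Then $\Lambda$ is an arbitrary finite sample of $\conv(A)$ and $\Gamma$ is an $\eps_\Gamma$-net of $A$, so the hypotheses of Lemma~\ref{lemma:lower_bound} are met (with $\varepsilon=\eps_\Gamma$); the lemma yields $x\leq\sum_{c\in\Lambda}\1_C(c) f(\|c-p\|)$ for all $p\in A$, i.e. $(x,y)$ is feasible for \eqref{prob:lower_bound_inter} with the same objective value. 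Hence the max over the smaller problem is at most the max over the larger. Second, \eqref{prob:lower_bound_inter}$\leq$\eqref{prob:pol*}: any feasible $(x,y)$ for \eqref{prob:lower_bound_inter} gives a configuration $C\in\multibinom{\Lambda}{N}\subset\multibinom{\conv(A)}{N}$ with $x\leq U(p,C)$ for all $p\in A$, so $x\leq\inf_{p\in A}U(p,C)=P(C)\leq\mathcal{P}(A)$; here I would recall the reformulation~\eqref{prob:original} showing $\mathcal{P}(A)$ equals the supremum over $\multibinom{\conv(A)}{N}$ (which is where Theorem~\ref{thm:necessary_condition} enters).

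Third, \eqref{prob:pol*}$\leq$\eqref{prob:upper_bound_inter}: by the reformulation~\eqref{prob:original}, $\mathcal{P}(A)$ is the max of $x$ over $C\in\multibinom{\conv(A)}{N}$ subject to $x\leq U(p,C)$ for \emph{all} $p\in A$; dropping to the subcollection of constraints indexed by $\Gamma\subseteq A$ only enlarges the feasible region, so the maximum can only increase — this is the one step that goes ``for free'' by constraint removal. Fourth, \eqref{prob:upper_bound_inter}$\leq$\eqref{prob:upper_bound}: take $(x,C)$ feasible for \eqref{prob:upper_bound_inter}; since $\Lambda$ is an $\eps_\Lambda$-net of $\conv(A)$, Lemma~\ref{lemma:upper_bound} (with $\varepsilon=\eps_\Lambda$) produces $C'\in\multibinom{\Lambda}{N}$ with $x\leq\sum_{c\in C'}f(\|c-p\|)+g_{c,p}(\eps_\Lambda)$ for all $p\in\Gamma$; writing $C'$ via its multiplicity vector $y\in\{0,\dots,N\}^\Lambda$ with $\1^\top y=N$ shows $(x,y)$ is feasible for \eqref{prob:upper_bound} with the same objective, so the max of \eqref{prob:upper_bound_inter} is at most the max of \eqref{prob:upper_bound}. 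Stringing the four together gives the theorem.

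The conceptual content is entirely carried by Lemmas~\ref{lemma:lower_bound} and \ref{lemma:upper_bound} together with the reformulation~\eqref{prob:original}, so I do not expect a genuine obstacle; the only thing to be careful about is bookkeeping — namely matching multisets $C$ with their multiplicity vectors $y$ (and checking $\1^\top y=N$, $0\le y_c\le N$ in each direction), confirming that the roles of $\Lambda$ and $\Gamma$ in the two lemmas are exactly what the two hierarchies require (arbitrary finite sample versus $\eps$-net), and being explicit that $\Gamma\subseteq A$ so that the constraint-removal step in the third inequality is legitimate. A subtle point worth a sentence is that feasibility in \eqref{prob:lower_bound_inter} must be checked against \emph{all} $p\in A$, which is precisely the output of Lemma~\ref{lemma:lower_bound} and not merely of the sampled constraints.
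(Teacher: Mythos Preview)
Your proposal is correct and follows essentially the same approach as the paper's own proof: both argue the four inequalities one at a time by showing that any feasible solution on the left yields a feasible solution on the right with the same objective value, invoking Lemma~\ref{lemma:lower_bound} for the first step, the reformulation~\eqref{prob:original} and constraint removal for the middle two, and Lemma~\ref{lemma:upper_bound} for the last. Your write-up is in fact more explicit than the paper's about the multiset--multiplicity bookkeeping and about which net hypothesis is needed where.
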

\begin{proof}
We show, that feasible solutions of the left hand sides are also feasible for the right hand sides with the same objective value justifying the asserted inequalities.
First, observe that Lemma \ref{lemma:lower_bound} implies that a feasible solution $x,y$ of \eqref{prob:lower_bound} is also feasible for \eqref{prob:lower_bound_inter} and the objective values coincide.
Next, we consider a feasible solution $x, y$ of \eqref{prob:lower_bound_inter} and observe that $y$ encodes a multiset $C\in \multibinom{\Lambda}{N}\subseteq\multibinom{\conv(A)}{N}$. Moreover, $x, C$ satisfy the constraints in \eqref{eq:polarization} and with the same objective value $x$.
The next inequality follows rather immediately since \eqref{prob:upper_bound_inter} is a relaxation of \eqref{eq:polarization} due to dropping constraints for $p\in A\setminus \Gamma$.
Lastly, if $x, C$ is a feasible solution of \eqref{prob:upper_bound_inter}, we apply Lemma \ref{lemma:upper_bound} to obtain a set $C'\in\multibinom{\Lambda}{N}$ satisfying the constraints of \eqref{prob:upper_bound}. Then, by encoding $C'$ through $y\in\{0,\ldots , N\}^\Lambda$ with $\1^\top  y = N$ we obtain a feasible solution to \eqref{prob:upper_bound} with the same objectve value $x$.
\end{proof}

Let us briefly comment on the computational complexity of the mixed-integer programs \eqref{prob:lower_bound} and \eqref{prob:upper_bound}. It is worth noting, that mixed-integer linear programming usually refers to optimization problems that include binary variables, which run significantly faster. We would like to note that the integral variables $y\in \{0,\ldots ,N\}^\Lambda$ in both \eqref{prob:lower_bound} and \eqref{prob:upper_bound} can be replaced by $|\Lambda| \cdot \log (N)$ binary variables. 

Moreover, in the lower bound of Theorem \ref{thm:hierarchy} the vector $y$ can be chosen as $y\in\{0,1\}^\Lambda$, which still provides a (potentially worse) lower bound and reduces the number of binary variables significantly. Unfortunately, a similar simplification is not immediately possible for the upper bound. However, we introduce another concept which aims to reduce the computational complexity in a similar fashion in the upper bound case.

\begin{definition}
\label{def:k_order_eps_net}
A finite subset $\Lambda\subset\R^n$ is called an $(\eps, k)$-net of $A$ if
\begin{enumerate}
	\item $\Lambda\subset A$,
	\item For every $p\in A$ there are at least $k$ distinct points $p_1, \dots, p_k\in \Lambda$ such that $|p_i - p| < \eps$.
\end{enumerate}
\end{definition}

Using an $(\eps_\Lambda, N)$-net we obtain a hierarchy similar to Theorem \ref{thm:hierarchy} restricting the possible entries of $y$ to $\{0,1\}$.
\begin{proposition}
\label{prob:binary}
Let $\eps_\Lambda, \eps_\Gamma > 0$ and $\Lambda$ be an $(\eps_\Lambda, N)$-net of $\conv(A)$ and $\Gamma$ be an $\eps_\Gamma$-net of $A$. Furthermore, let $g_{c, p}$ be a family of control functions. Then,
\begin{align*}
	\eqref{prob:upper_bound_inter}
	\leq \max\ & x&\\
	& y\in\{0,1\}^\Lambda\\
	& \1^\top  y = N \\
	& x \leq \sum_{c\in\Lambda} y_c\cdot (f(\|c - p\|) + g_{c, p}(\eps_\Lambda)) && \text{ for all } p\in\Gamma
\end{align*}
\end{proposition}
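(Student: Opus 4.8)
The plan is to mimic the proof of the upper-bound half of Theorem~\ref{thm:hierarchy}, but to replace the rounding argument of Lemma~\ref{lemma:upper_bound} (which may send several configuration points to the same element of $\Lambda$, forcing $y_c$ to exceed $1$) by a rounding that uses the richer structure of an $(\eps_\Lambda, N)$-net to keep all images distinct. Concretely, I start with a feasible solution $x, C$ of \eqref{prob:upper_bound_inter}, where $C = \{\{c_1,\dots,c_N\}\} \in \multibinom{\conv(A)}{N}$ and $x \leq \sum_{i=1}^N f(\|c_i - p\|)$ for all $p\in\Gamma$. I then want to produce a \emph{set} $C' = \{c_1', \dots, c_N'\} \subset \Lambda$ of $N$ \emph{distinct} points with $\|c_i' - c_i\| < \eps_\Lambda$ for each $i$, because such a $C'$ is encoded by a genuine $0/1$ vector $y \in \{0,1\}^\Lambda$ with $\1^\top y = N$, and then the same telescoping estimate as in Lemma~\ref{lemma:upper_bound} — using $g_{c',p}(\|c'-c\|) \le g_{c',p}(\eps_\Lambda)$ together with control property (3) — gives $x \le \sum_{c\in C'} \left( f(\|c-p\|) + g_{c,p}(\eps_\Lambda)\right)$ for all $p\in\Gamma$, i.e.\ feasibility for the new program with the same objective value $x$.

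The combinatorial heart of the argument is therefore: given $c_1,\dots,c_N \in \conv(A)$ and an $(\eps_\Lambda,N)$-net $\Lambda$ of $\conv(A)$, one can pick distinct $c_i' \in \Lambda$ with $\|c_i'-c_i\|<\eps_\Lambda$. I would do this greedily, processing $i=1,\dots,N$ in turn: by Definition~\ref{def:k_order_eps_net} there are at least $N$ points of $\Lambda$ within distance $\eps_\Lambda$ of $c_i$; at most $i-1 \le N-1$ of them have already been used, so at least one remains, and we assign it to $c_i'$. (Alternatively one can phrase this as a Hall-type matching: the bipartite graph joining each $c_i$ to $\{c\in\Lambda : \|c-c_i\|<\eps_\Lambda\}$ satisfies Hall's condition because every vertex on the $c_i$ side already has degree $\ge N$, hence any subset $S$ of the $c_i$'s has a neighbourhood of size $\ge N \ge |S|$; so a perfect matching into distinct $\Lambda$-points exists.) The greedy version is cleaner and avoids invoking Hall.

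With $C'$ in hand, the remaining estimate is essentially a line-for-line copy of the computation in Lemma~\ref{lemma:upper_bound}: write $\sum_{i} \left(f(\|c_i'-p\|)+g_{c_i',p}(\eps_\Lambda)\right) = \sum_i \left(f(\|c_i'-p\|)-f(\|c_i-p\|)\right) + \sum_i f(\|c_i-p\|) + \sum_i g_{c_i',p}(\eps_\Lambda)$, bound the first sum below by $-\sum_i g_{c_i',p}(\|c_i'-c_i\|)$ via control property (3), bound that below by $-\sum_i g_{c_i',p}(\eps_\Lambda)$ since each $g_{c_i',p}$ is non-decreasing and $\|c_i'-c_i\|<\eps_\Lambda$, and use $\sum_i f(\|c_i-p\|)\ge x$. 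The two $g$-sums cancel and we are left with $\ge x$, completing the proof.

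I expect the only real obstacle to be making the distinctness/matching step watertight — in particular being careful that "$N$ distinct points of $\Lambda$ within $\eps_\Lambda$" in Definition~\ref{def:k_order_eps_net} is exactly what feeds the greedy count, and that we never need more than $N-1$ points to have been previously consumed. Everything after that is the already-established control-function bookkeeping, so no new analytic input is required.
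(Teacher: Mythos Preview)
Your proposal is correct and follows essentially the same approach as the paper: the paper's proof simply says to redo the argument of Theorem~\ref{thm:hierarchy}, replacing the multiset $C' = \{\{n(c) : c\in C\}\}$ from Lemma~\ref{lemma:upper_bound} by a set of $N$ distinct points of $\Lambda$, which exists because $\Lambda$ is an $(\eps_\Lambda, N)$-net. Your greedy/Hall argument for distinctness and the explicit control-function estimate actually provide more detail than the paper's one-line sketch.
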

\begin{proof}
The proof works similar to the proof of Theorem \ref{thm:hierarchy} by replacing $C' = \{\{n(c) \ : c\in C\}\}$ in the proof of Lemma \ref{lemma:upper_bound} by a set $C'$ of $N$ distinct points of $\Lambda$.
This is possible since $\Lambda$ is an $(\eps_\Lambda, N)$-net (see Definition \ref{def:k_order_eps_net}).
\end{proof}

A trivial example of an $(\eps, N)$-net can basically be obtained by a multiset consisting of $N$ copies of an $\eps$-net. However, in practise there are usually solutions that need fewer points, albeit more than a classical $\eps$-net.

\subsection{Convergence Results}
\label{subsec:convergence}

After establishing upper and lower bounds to $\mathcal{P}(A)$ through the hierarchies presented in Theorem \ref{thm:hierarchy}, we study the quality of these bounds. To this end, we show in this section, that solutions of the bounding problems \eqref{prob:lower_bound} and \eqref{prob:upper_bound} converge, as $\eps_\Lambda, \eps_\Gamma$ both tend to $0$, to a solution of the original problem \eqref{prob:original}. Both proofs rely in large parts on the proof of Lemma 6.1 in \cite{Shapiro2009a}, which proves similar convergence for more general semiinfinite programs, but include minor necessary modifications. At first, we focus on the lower bounds, i.e., we show, that \eqref{prob:lower_bound} converges to \eqref{prob:lower_bound_inter} as $\eps_\Gamma\rightarrow 0$:

\begin{theorem}
\label{thm:lower_convergence}
Let $(\eps_k)$ be a non-negative sequence converging towards $0$. Furthermore, for every $k\in \N$ choose an $\eps_k$ net $\Gamma_k$ of $A$.
Then, any accumulation point of a sequence $(x_k,y_k)_{k \in \N}$ of optimal solutions of \eqref{prob:lower_bound} w.r.t. $\Gamma_k$ and $\eps_k$ is an optimal solution of \eqref{prob:lower_bound_inter}.
\end{theorem}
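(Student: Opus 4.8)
The plan is to follow the structure of the proof of \cite[Lemma 6.1]{Shapiro2009a}, exploiting a simplification special to our setting: in \eqref{prob:lower_bound} the integer vector $y$ ranges over the \emph{fixed finite} set $Y := \{y\in\{0,\ldots,N\}^\Lambda : \1^\top y = N\}$, and only $\Gamma_k$ and $\eps_k$ vary with $k$. Consequently, if $(x^*,y^*)$ is an accumulation point of the sequence $(x_k,y_k)$ of optimal solutions, then — since $Y$ is discrete — there is a subsequence $(k_j)$ along which $y_{k_j}=y^*\in Y$ for \emph{all} $j$ and $x_{k_j}\to x^*$. Writing $C^*$ for the $N$-point multiset encoded by $y^*$, it then remains to prove two things: \textbf{(i)} $(x^*,y^*)$ is feasible for \eqref{prob:lower_bound_inter}, i.e.\ $x^*\le U(p,C^*)$ for every $p\in A$; and \textbf{(ii)} $x^*$ equals the optimal value $v^*$ of \eqref{prob:lower_bound_inter}.

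For (i) I would argue as follows. For each $j$ the pair $(x_{k_j},y^*)$ is feasible for \eqref{prob:lower_bound} with respect to $\Gamma_{k_j}$ and $\eps_{k_j}$, and $\Gamma_{k_j}$ is an $\eps_{k_j}$-net of $A$; hence Lemma \ref{lemma:lower_bound} applies and yields $x_{k_j}\le \sum_{c\in\Lambda} y^*_c f(\|c-p\|)=U(p,C^*)$ for every $p\in A$. The right-hand side does not depend on $j$, so letting $j\to\infty$ gives $x^*\le U(p,C^*)$ for all $p\in A$; thus $(x^*,y^*)$ is feasible for \eqref{prob:lower_bound_inter}, and in particular $x^*\le v^*$. (That the $x_k$ stay bounded — so that accumulation points exist and are finite — is immediate: $x_k\le v^*$ by Theorem \ref{thm:hierarchy}, and $x_k$ is bounded below since it is a finite minimum of finite sums.)

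For (ii) it remains to show $x^*\ge v^*$, which I would obtain by transporting an optimizer of \eqref{prob:lower_bound_inter} into a feasible point of \eqref{prob:lower_bound}. Let $(x',y')$ be optimal for \eqref{prob:lower_bound_inter}, so $x'=v^*$ and $x'\le \sum_{c\in\Lambda}y'_c f(\|c-p\|)$ for all $p\in A$. For each $k$ set $\tilde x_k:=\min_{p\in\Gamma_k}\sum_{c\in\Lambda}y'_c\bigl(f(\|c-p\|)-g_{c,p}(\eps_k)\bigr)$; then $(\tilde x_k,y')$ is feasible for \eqref{prob:lower_bound} w.r.t.\ $\Gamma_k,\eps_k$, so optimality of $(x_k,y_k)$ gives $x_k\ge\tilde x_k$. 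Since $\Gamma_k\subseteq A$ and $\Lambda$ is finite,
$$\tilde x_k \ \ge\ \min_{p\in A}\sum_{c\in\Lambda}y'_c f(\|c-p\|)\ -\ \sum_{c\in\Lambda}y'_c\,\sup_{p\in A}g_{c,p}(\eps_k)\ \ge\ v^*\ -\ \sum_{c\in\Lambda}y'_c\,\sup_{p\in A}g_{c,p}(\eps_k).$$
Passing to the subsequence, $x^*=\lim_j x_{k_j}\ge\liminf_k\tilde x_k\ge v^*-\limsup_k\sum_{c\in\Lambda}y'_c\,\sup_{p\in A}g_{c,p}(\eps_k)$, so $x^*\ge v^*$ as soon as $\sup_{p\in A}g_{c,p}(\eps)\to 0$ as $\eps\to0^+$ for each fixed $c\in\Lambda$. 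Combining with (i) yields $x^*=v^*$, i.e.\ $(x^*,y^*)$ is an optimal solution of \eqref{prob:lower_bound_inter}.

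The step I expect to be the real obstacle is this last one: the \emph{uniform} (over $p\in A$) vanishing of the control functions $g_{c,p}(\eps)$ as $\eps\to0$. This is not forced by the bare axioms of a family of control functions, which only give pointwise continuity in the first argument and could a priori allow $g_{c,p}$ to blow up as $p$ approaches some point. I would therefore either add a mild equicontinuity hypothesis on the family $(g_{c,p})_{p\in A}$ or — more in the spirit of the paper — simply work with the explicit family constructed above, where $g_{c,p}(\eps)=\max\bigl(f(\|c-p\|)-f(\|c-p\|+\eps),\,f(\max(0,\|c-p\|-\eps))-f(\|c-p\|)\bigr)$: for the fixed $c$ the quantity $\|c-p\|$ lies in the compact interval $[0,\max_{q\in A}\|c-q\|]$, and uniform continuity of $f$ on that interval makes both terms tend to $0$ uniformly in $p$, closing the argument.
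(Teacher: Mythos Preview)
Your proof is correct and follows essentially the same two-step structure as the paper's: feasibility of the accumulation point, then optimality by transporting an optimizer of \eqref{prob:lower_bound_inter} into \eqref{prob:lower_bound}. Two differences are worth noting. First, for feasibility you exploit that $\Lambda$ is fixed and hence $Y$ is a finite set, so along a subsequence $y_{k_j}\equiv y^*$, which lets you invoke Lemma~\ref{lemma:lower_bound} directly to get $x_{k_j}\le U(p,C^*)$ for all $p\in A$; the paper instead picks $p_k\in\Gamma_k$ with $p_k\to p$ and passes to the limit in the constraint by hand. Your route is cleaner and reuses work already done. Second, for the optimality direction both arguments hinge on $\sup_{p\in A}g_{c,p}(\eps)\to 0$ as $\eps\to 0$; the paper simply asserts that $g_c:=\max_{p\in A}g_{c,p}$ is continuous with $g_c(0)=0$, whereas you correctly flag that this uniform vanishing is not forced by the bare axioms of a control family and supply a justification via the explicit family (uniform continuity of $f$ on a compact interval). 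So your proposal is, if anything, more careful than the paper's own proof on this point.
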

\begin{proof}
Let $(\bar{x},\bar y)$ be an accumulation point of $(x_k,y_k)$. By passing to a subsequence we can assume that $(x_k,y_k) \rightarrow (\bar{x},\bar{y})$ if $k\rightarrow \infty$.
We are now going to prove, that $(\bar{x},\bar{y})$ is feasible and in fact optimal for \eqref{prob:lower_bound_inter}:

Consider an arbitrary $p\in A$ and observe that since $\Gamma_k$ is an $\eps_k$-net of $A$, there exists a sequence $(p_k)$ with $p_k\in \Gamma_k$ such that $p_k\rightarrow p$ as $k\rightarrow \infty$. We observe further, that for all $k$ we have
$$x_k \leq \sum_{c\in\Lambda} (y_k)_c \cdot (f(\|c - p_k\|) - g_{c, p_k}(\eps_k)) \leq \sum_{c\in\Lambda} (y_k)_c\cdot f(\|c - p_k\|)$$
and by taking limits
$$\bar x \leq \sum_{c\in \Lambda}\bar y_c \cdot f(\|c - p\|).$$
Hence, $(\bar{x},\bar{y})$ is feasible for \eqref{prob:lower_bound_inter}.

Now, let $(x,y)$ be an arbitrary solution to \eqref{prob:lower_bound_inter}.
Since $A$ is compact and $\eps_k>0$, we know that $g_c = \max_{p\in A} g_{c, p}$ is a continuous, monotonously non-decreasing function with $g_c(0) = 0$. We now observe, that 
$$(x- \sum_{c\in\Lambda} y_c\cdot g_c(\eps_k), y)$$ is feasible for \eqref{prob:lower_bound} with respect to $\Gamma_k$. Since $(x_k, y_k)$ is an optimal solution to \eqref{prob:lower_bound}, we have $x_k\geq x  - \sum_{c\in\Lambda} y_c\cdot g_c(\eps_k)$. Consequently, as  $g_c(0)=0$, in the limit we obtain that $\bar{x} \geq x$. Since $x$ was chosen arbitrarily, we conclude, that $(\bar{x},\bar{y})$ is indeed optimal for \eqref{prob:lower_bound_inter}.
\end{proof}

Note that the convergence of \eqref{prob:lower_bound_inter} to \eqref{prob:pol*} as $\eps_\Lambda\rightarrow 0$ follows directly since the utility function and $f$ are continuous. Thus, Theorem \ref{thm:lower_convergence} implies the convergence of \eqref{prob:lower_bound} to \eqref{prob:pol*}, i.e. the value of \eqref{prob:lower_bound} tends to $\mathcal{P}(A)$, as $\eps_\Lambda,\eps_\Gamma\rightarrow 0$.

Moreover, with the same arguments, we conclude the convergence of \eqref{prob:upper_bound_inter} to \eqref{prob:pol*} as $\eps_\Gamma\rightarrow 0$ and thus only one proof of convergence remains, namely that \eqref{prob:upper_bound} converges to \eqref{prob:upper_bound_inter} as $\eps_\Lambda\rightarrow 0$.

One difficulty of the following theorem is the different kinds of feasible solutions when altering the sample $\Lambda$.
Feasible solutions of \eqref{prob:upper_bound} have the form $y\in \{1, \dots, N\}^\Lambda$ with $\1^\top y = N$ while feasible solutions of \eqref{prob:upper_bound_inter} are $N$-point multisets supported on $\conv(A)$.
Note that these objects do not permit an easy discussion of convergence.
However, both notions can be translated into an element $\omega\in(\conv(A))^N$ which is independent of $\Lambda$ and allows a discussion of convergence.
Note that $\omega$ can canonically be translated back into a multiset.

\begin{theorem}
\label{thm:upper_convergence}
Let $(\eps_k)$ be a non-negative sequence converging towards $0$. Furthermore, for every $k\in \N$ choose an $\eps_k$-net $\Lambda_k$ of $\conv(A)$.
Let $(x_k,y_k)$ be a sequence of optimal solutions of \eqref{prob:upper_bound} w.r.t. $\Lambda_k$, $\eps_k$.
Identifying each $y_k$ with $\omega_k\in(\conv(A))^N$, any accumulation point $(\bar x,\bar \omega )$ of this sequence corresponds to an optimal solution of \eqref{prob:upper_bound_inter} by identification of $\bar\omega$ with a multiset.  
\end{theorem}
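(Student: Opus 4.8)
I would adapt the proof of Theorem~\ref{thm:lower_convergence}, noting that here the two halves of the argument swap difficulty: the control term enters the constraints of \eqref{prob:upper_bound} \emph{additively} (not subtracted, as in \eqref{prob:lower_bound}), so it is now the feasibility of the limit that requires care, while optimality of the limit will follow cleanly from Lemma~\ref{lemma:upper_bound}. The obstacle flagged in the text --- that feasible points of \eqref{prob:upper_bound} and of \eqref{prob:upper_bound_inter} formally live in different spaces --- is dispatched once and for all by the identification with tuples in $(\conv(A))^N$: as $A$ is compact, so is $\conv(A)$, hence $(\conv(A))^N$ is compact and closed; for each $k$ I list the multiset encoded by $y_k$ (in any order) as $\omega_k = (\omega_k^{(1)},\dots,\omega_k^{(N)})$, so that the constraints of \eqref{prob:upper_bound} read
$$x_k \le \sum_{i=1}^N \Big( f(\|\omega_k^{(i)}-p\|) + g_{\omega_k^{(i)},p}(\eps_k) \Big) \qquad (p\in\Gamma).$$
Let $(\bar x,\bar\omega)$ be an accumulation point; passing to a subsequence, $(x_k,\omega_k)\to(\bar x,\bar\omega)$, and let $\bar C\in\multibinom{\conv(A)}{N}$ be the multiset associated with $\bar\omega$ (it lies in $(\conv(A))^N$ since that set is closed).

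\emph{Feasibility of $(\bar x,\bar C)$.} Fix $p\in\Gamma$; since $\Gamma$ is finite this is the only parameter to track. By continuity of $f$, $f(\|\omega_k^{(i)}-p\|)\to f(\|\bar\omega^{(i)}-p\|)$. For the error terms I would argue exactly as in the proof of Theorem~\ref{thm:lower_convergence}, but maximizing over $\conv(A)$ instead of over $A$: the function $g_p(\eps):=\max_{c\in\conv(A)} g_{c,p}(\eps)$ is continuous, non-decreasing, and $g_p(0)=0$, whence $\sum_{i=1}^N g_{\omega_k^{(i)},p}(\eps_k)\le N\,g_p(\eps_k)\to0$. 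Letting $k\to\infty$ gives $\bar x \le \sum_{c\in\bar C} f(\|c-p\|)$ for every $p\in\Gamma$, so $(\bar x,\bar C)$ is feasible for \eqref{prob:upper_bound_inter}.

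\emph{Optimality of $(\bar x,\bar C)$.} Let $(x^*,C^*)$ be any feasible solution of \eqref{prob:upper_bound_inter}. Since $\Lambda_k$ is an $\eps_k$-net of $\conv(A)$, Lemma~\ref{lemma:upper_bound} produces $C'_k\in\multibinom{\Lambda_k}{N}$ with $x^*\le\sum_{c\in C'_k}\big(f(\|c-p\|)+g_{c,p}(\eps_k)\big)$ for all $p\in\Gamma$; encoding $C'_k$ as $y'_k\in\{0,\dots,N\}^{\Lambda_k}$ with $\1^\top y'_k=N$ makes $(x^*,y'_k)$ feasible for \eqref{prob:upper_bound} with respect to $\Lambda_k,\eps_k$. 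Optimality of $(x_k,y_k)$ then forces $x_k\ge x^*$, and in the limit $\bar x\ge x^*$. As $(x^*,C^*)$ was arbitrary, $\bar x$ is at least the optimal value of \eqref{prob:upper_bound_inter}; together with feasibility this shows $(\bar x,\bar C)$ attains that optimum.

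The main obstacle is the feasibility step, precisely the claim that the additive control terms $g_{\omega_k^{(i)},p}(\eps_k)$ vanish along the sequence; this reduces to $g_p=\max_{c\in\conv(A)}g_{c,p}$ being a continuous non-decreasing function vanishing at $0$, which is immediate for the explicit control functions built earlier in this section and parallels the corresponding step in the proof of Theorem~\ref{thm:lower_convergence}. Everything else is a routine compactness argument together with the one-time passage to $(\conv(A))^N$.
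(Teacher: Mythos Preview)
Your proposal is correct and follows essentially the same route as the paper's own proof: the same identification of $y_k$ with a tuple $\omega_k\in(\conv(A))^N$, the same auxiliary function $g_p=\max_{c\in\conv(A)}g_{c,p}$ to kill the additive control terms in the feasibility step, and the same appeal to Lemma~\ref{lemma:upper_bound} for optimality. If anything, your write-up is slightly more careful (you track the factor $N$ in front of $g_p(\eps_k)$ and explicitly flag the one nontrivial point, namely that $g_p(\eps)\to 0$), but the argument is the paper's argument.
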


\begin{proof}
The proof is similiar to the proof of Theorem \ref{thm:lower_convergence}.
Note that, since order of elements is not important for the discussed problems, we can regard to elements of $(\conv(A))^N$ either as tuples or as multisets depending on the context. Suppose $(x_k, \omega_k)$ with has an accumulation point $(\bar x, \bar \omega)$. By passing to a subsequence we can assume that $(x_k, \omega_k)\rightarrow (\bar x, \bar \omega)$. Consider the continuous function $g_p = \max_{c\in\conv(A)} g_{c, p}$ with $g_p(0) = 0$. Then, we have for all $k$ and $p\in\Gamma$:
\begin{align*}
	x_k &\leq \sum_{c\in\Lambda_k} (y_k)_c\cdot(f(\|c - p\|) + g_{c, p}(\eps_k))\\
	&\leq \sum_{i=1}^N f(\|(\omega_k)_i - p\|) + g_p(\eps_k)
\end{align*}
By taking limits we obtain
$$\bar x \leq \sum_{i=1}^N f(\| \bar\omega_i - p\|)$$
for all $p\in\Gamma$.
Thus $\bar x, \bar \omega$ is feasible for \eqref{prob:upper_bound_inter}.

Now suppose $x, \omega$ is an arbitrary solution of \eqref{prob:upper_bound_inter}.
Then by Lemma \ref{lemma:upper_bound} there exists $\omega_k'$ such that $x, \omega_k'$ is a feasible solution for \eqref{prob:upper_bound_inter}.
Since $(x_k, \omega_k)$ is an optimal solution, we have
$x_k \geq x$ and by taking limits
$\bar x \geq x$.
Therefore $\bar x$ is also optimal for \eqref{prob:upper_bound}.
\end{proof}

Note, that the proofs of Theorems \ref{thm:lower_convergence}, \ref{thm:upper_convergence} still work if we restrict $y$ to be binary as was discussed at the end of Section \ref{subsec:mips}.

Combining Theorems \ref{thm:lower_convergence} and \ref{thm:upper_convergence}, we conclude that by choosing a suitable sequence $(\eps_\Gamma)_k, (\eps_\Lambda)_k$, we can in theory bound the value of $\mathcal{P}(A)$ as tightly as we need. However, solving the respective mixed-integer linear problems in practice will pose a computational challenge.

\section{Computational results}
\label{sec:comp_results}

This section presents numerical experiments  illustrating the capabilities and limits of the MIP approach presented in this paper.
All computations have been performed using Gurobi on a HP DL380 Gen9 server with two Intel(R) Xeon(R) CPU E5-2660v@2.00GHz (each with 14 cores) and 256 GB RAM.
We first focus on a simple illustrative example, where $A$ is an equilateral triangle and the size of the configuration is $N=3$. In addition, we chose $f(x) = e^{-5\|x\|^2}$ for our potential function and $\varepsilon_\Gamma=0.014, \varepsilon_\Lambda = \varepsilon_\Gamma /3$ as the respective discretization widths of $\Gamma\subseteq A$ and $\Lambda\subseteq \conv(A)$. Lastly, we restrict both, \eqref{prob:lower_bound} and \eqref{prob:upper_bound} to binary variables $y\in \{0,1\}^\Lambda$ instead of integral $y\in \{0,\ldots , N\}^\Lambda$ as discussed below Theorem \ref{thm:hierarchy}. Since we expect the resulting configuration to consist of three separate points, this should not significantly impact the quality of the bounds. 

We illustrate the configuration given by \eqref{prob:lower_bound} in Figure \ref{fig:lower_bound_configuration}. It was obtained after approximately $10$ hours.
\begin{figure}[H]
\centering
\includegraphics[scale=0.5, trim = 0 0 0 1cm]{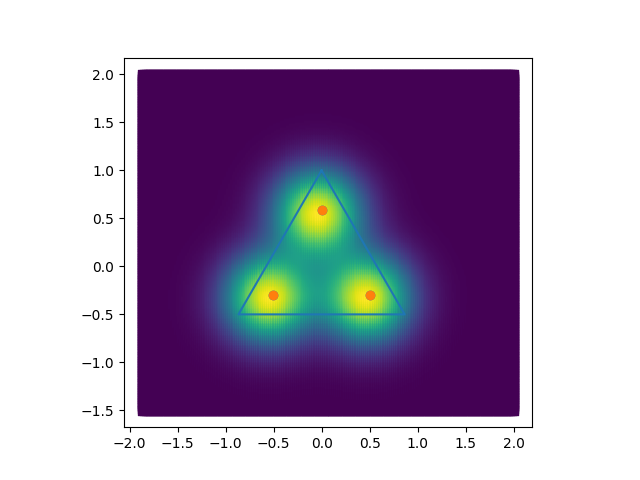}
\caption{Optimal configuration for \eqref{prob:lower_bound} with $\eps = 0.014$ and a heatmap of the respective $f$-potential (from dark blue over green to yellow). The points of the configuration are represented by orange circles.}
\label{fig:lower_bound_configuration}
\end{figure}
We continue by assessing the numerical evidence on the convergence for the above example. To this end, we illustrate the quality of the binary versions of both, \eqref{prob:lower_bound} and \eqref{prob:upper_bound} for decreasing values of $\eps_\Lambda$ and $\eps_\Gamma$. Here, the binary variant of \eqref{prob:upper_bound} was derived from Proposition \ref{prob:binary}. To be precise, for every $\eps\in\{0.04, 0.038, \dots, 0.014\}$ we computed the lower bound using $\eps_\Lambda =\eps/3$, $\eps_\Gamma = \eps$ and the upper bound using $\eps_\Gamma = \eps_\Lambda = \eps$. We chose these scalings for a better comparability, since the $(\eps_\Lambda, 3)$-net in the upper-bound case contains more sample points and therefore yields more variables than an $(\eps_\Lambda, 1)$-net.
Furthermore, we used scaled versions of the $A_2$ lattice complemented with additional sample points on the boundary to generate the samples $\Lambda$ and $\Gamma$. This construction ensures that both, $\Lambda$ and $\Gamma$ are indeed $\varepsilon_\Lambda$ and $\varepsilon_\Gamma$-nets respectively. The obtained bounds are visualized in Figure \ref{fig:convergence}.

It is apparent, that lower values of $\eps$ do not always yield better bounds although there is a clearly visible trend to close the gap between the bounds as can be expected from our convergence results established in Theorems \ref{thm:lower_convergence} and \ref{thm:upper_convergence}. A drawback of this approach is the computational runtime of the respective MIPs, which vastly increases with the sample size of $\Gamma$ and $\Lambda$ from a few seconds if $\eps=0.04$ to $10$ hours for $\eps=0.014$.

\begin{figure}[H]
\includegraphics[scale=0.3]{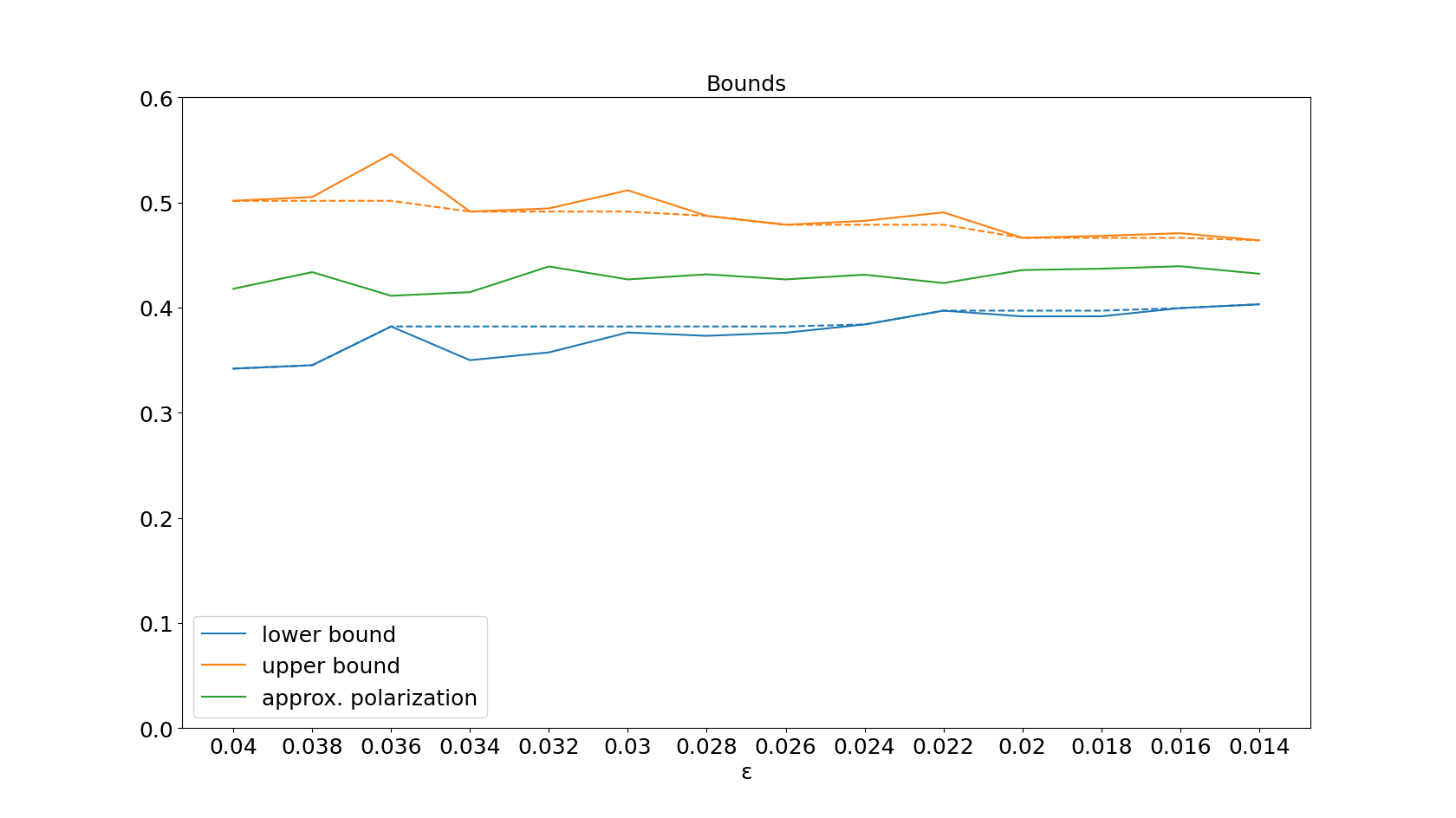}
\caption{Upper and lower bounds computed with decreasing values of $\eps$ and the respective running optimum (dashed lines) as well as an approximate polarization of the lower bound configuration.}
\label{fig:convergence}
\end{figure}

As an additional academic example, we use the same approach for different suitable choices of $\eps = \eps_\Lambda = \eps_\Gamma$ and different convex, non-convex or even non-connected $A$ to showcase the wide applicability of our approach. We illustrate the polarizations derived by the binary approximation of our lower bound MIP \eqref{prob:lower_bound} in Figure \ref{fig:computational_results}.
\begin{figure}[H]
\centering
\includegraphics[trim = 5cm 6cm 0 7cm, clip, scale=0.38]{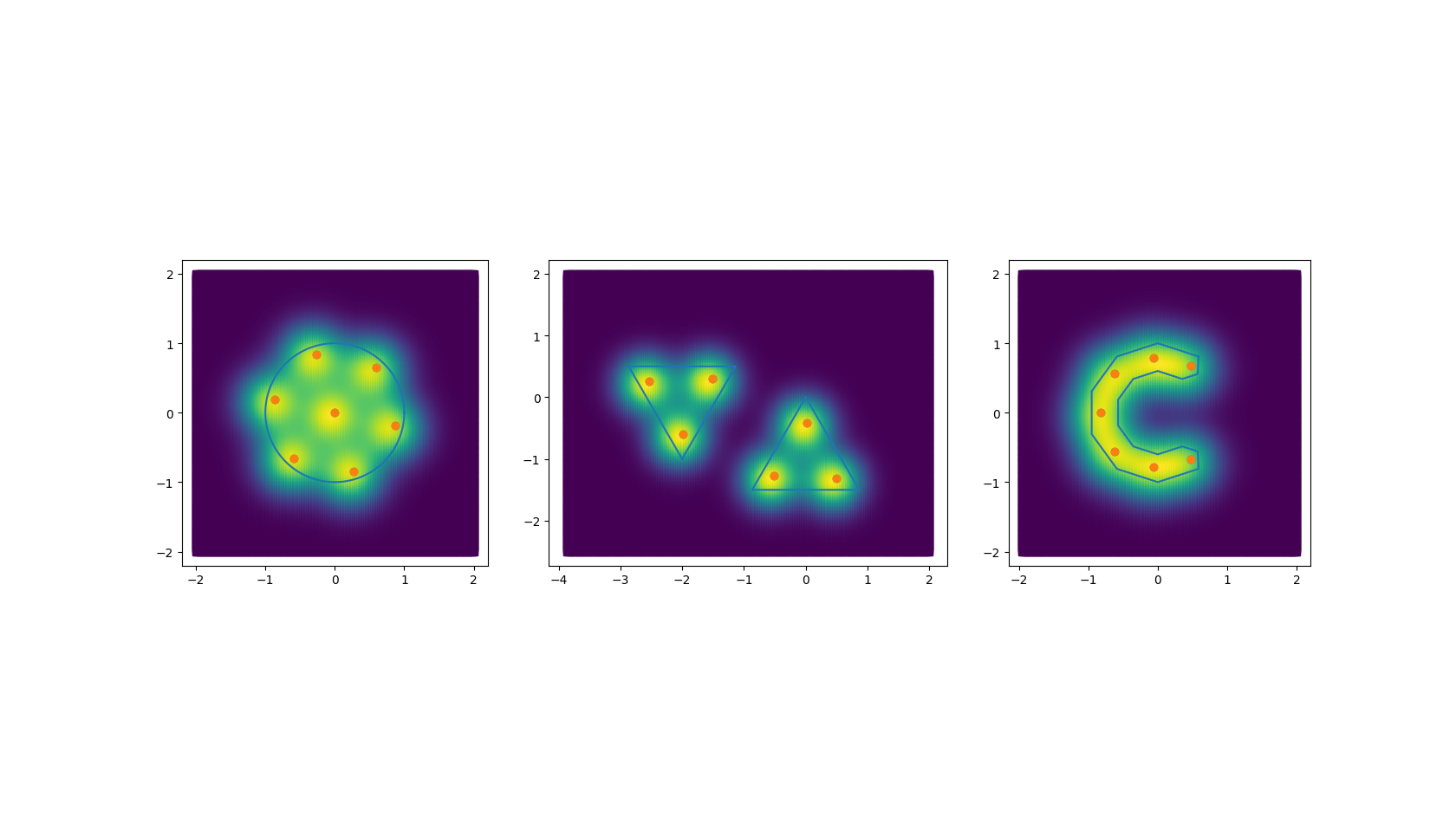}
\caption{
	Optimal configurations of \eqref{prob:lower_bound} for different $A$ in orange with a heatmap of the respective potential (from dark blue over green to yellow). The border of the respective shape $A$ is highlighted in blue (from left to right: ball, triangles, non-convex shape). 
}
\label{fig:computational_results}
\end{figure}
Moreover, we briefly summarize the computational results on these additional shapes $A$ in Table \ref{tab:computational_results} below. The respective sample widths were chosen such that the corresponding MIPs could be solved in reasonable time.
\begin{table}
\centering
\begin{tabular}{c|ccc}
	$A$ & Ball  & Two twisted triangles & Non-convex shape\\
	$N$ & 7 & 6 & 7\\
	\hline
	\hline
	lower bound & 0.391063 & 0.381283 & 0.918088\\
	\hline
	$\eps = \eps_\Lambda = \eps_\Gamma$ & 0.0625 & 0.025 & 0.025\\
	\begin{tabular}{c}computation time\\in seconds\end{tabular} & 6145 & 1238& 2020\\
	\hline
	\hline
	upper bound & 0.942982 & 0.506328 & 1.12719\\
	\hline
	$\eps = \eps_\Lambda = \eps_\Gamma$ & 0.0875 & 0.0375 & 0.03125\\
	\begin{tabular}{c}computation time\\in seconds\end{tabular} & 6741 & 858& 879\\
	\hline
	\hline
	gap  & ca. $59\%$ & ca. $25\%$ & ca $19\%$\\
\end{tabular}
\label{tab:computational_results}
\end{table}
We note, that the shape of $A$ significantly impacts the runtime of our MIP approach. It seems that the large symmetry group of the ball may contribute to a larger runtime as good solutions may be found everywhere in the branch-and-bound tree used by solvers such as Gurobi. If true, symmetry reduction techniques may lead to substantial improvements.

\section{Outlook}\label{sec:outlook}

We have seen in Section \ref{sec:darkest_points} that the location of the darkest points and the location of the points of a locally optimal configuration are intertwined. We suspect that these results can be extended, in particular by utilizing symmetries of $A$ or requiring $A$ to be convex or even a polytope. Furthermore, it would be interesting to extend these results to other choices of $D$.

However, it is clear that there will be limitations to these kinds of results. Consider for example $A = D = S^{n-1}$ the unit sphere. In this case, no obvious variant of Theorem \ref{thm:necessary_condition} holds.

In this paper, we have not dealt with explicit computations of locally or globally optimal point configurations, even on simple sets such as $n$-gons or the unit ball.
However, numerical experiments suggest that such configurations show some structure and we hope that extensions of the results in Section \ref{sec:darkest_points} can be utilized to obtain proof of optimality for some configurations.
Here, we would like to highlight one result in this direction we are aware of, namely that for certain Riesz potentials of modest decay and $A$ equals the closed $d$-dimensional unit ball, the optimal point configuration consists of $N$ copies of the origin (see \cite[Theorem 14.2.6]{borodachov2019discrete}). We were able to observe similar effects in numerical experiments on regular polytopes.

The MIP hierarchies presented in Section \ref{sec:mips} give provable upper and lower bounds converging to the optimal solution. However, unsurprisingly computing these bounds for sufficiently fine samples is very time consuming since MIP is NP-complete. A natural question is, whether well known techniques from mathematical programming - such as convex relaxations, inner approximations, column generation or local refinement, that speed up the computations can be utilized to achieve results for finer samples. However, most of these techniques only provide approximations of the discussed MIP hierarchies, which might limit the gain achieved through the finer samples. 

Moreover, it might be helpful to carefully fit the choice of the samples to the specific instance of the problem.
For example, if one has a conjecture for an optimal configuration and/or the correct location of the darkest points, this information can be fitted into the samples while retaining the $\varepsilon$-net property of the samples.
Furthermore, these ideas might provide a way to use our bounds for analytic proofs of optimality in highly structured situations.

\backmatter

\bmhead{Data Availability}
Data sharing not applicable to this article as no datasets were generated oranalysed during the current study.

\bmhead{Acknowledgments}

The authors like to thank Frank Vallentin for useful suggestions. M.C.Z. is partially supported ``Spectral bounds in extremal discrete geometry'' (project number 414898050) funded by the DFG.


\bibliography{sn-article}

\end{document}